\numberwithin{equation}{section}
\newtheorem{theorem}{Theorem}[section]
\newtheorem{lemma}{Lemma}[section]
\newtheorem{proposition}{Proposition}[section]
\newtheorem{definition}{Definition}[section]
\newtheorem{remark}{Remark}[section]
\newcommand{\bal}{\begin{align}}
\newcommand{\bbal}{\begin{align*}}
\newcommand{\beq}{\begin{equation}}
\newcommand{\eeq}{\end{equation}}
\newcommand{\bca}{\begin{cases}}
\newcommand{\eca}{\end{cases}}
\newcommand{\pa}{\partial}
\newcommand{\fr}{\frac}
\newcommand{\na}{\nabla}
\newcommand{\De}{\Delta}
\newcommand{\cd}{\cdot}
\newcommand{\ep}{\varepsilon}
\newcommand{\dd}{\mathrm{d}}
\newcommand{\R}{\mathbb{R}}
\newcommand{\vv}{\mathbf{v}}
\newcommand{\bi}{\Big}
\begin{document}
\title{Ill-posedness for the higher dimensional Camassa-Holm equations in Besov spaces }

\author{Min Li$^{1}$ and Yingying Guo$^{2,}$\footnote{E-mail: limin@jxufe.edu.cn; guoyy35@fosu.edu.cn; } \\
\small $^1$ School of information Technology, Jiangxi University of Finance and Economics, Nanchang, 330032, China\\
\small $^2$ Department of Mathematics, Foshan University, Foshan, Guangdong 528000, China}

\date{}

\maketitle\noindent{\hrulefill}

{\bf Abstract:} In the paper, by constructing a initial data $u_{0}\in B^{\sigma}_{p,\infty}$ with $\sigma-2>\max\{1+\frac 1 p, \frac 3 2\}$, we prove that the corresponding solution to the higher dimensional Camassa-Holm equations starting from $u_{0}$ is discontinuous at $t=0$ in the norm of $B^{\sigma}_{p,\infty}$, which implies that the ill-posedness for the higher dimensional Camassa-Holm equations in $B^{\sigma}_{p,\infty}$.

{\bf Keywords:} Higher dimensional Camassa-Holm equations; Ill-posedness; Besov spaces.

{\bf MSC (2010):} 35A01, 35Q35, 37K10
\vskip0mm\noindent{\hrulefill}

\section{Introduction}\label{sec1}
In this paper, we consider the following initial value problem for the higher dimensional Camassa-Holm equations:
\begin{equation}\label{E-P}
\begin{cases}
\partial_tm+u\cdot \nabla m+\nabla u^T\cd m+(\mathrm{div} u)m=0, \qquad &(t,x)\in \R^+\times \R^d,\\
m=(1-\De)u,\qquad &(t,x)\in \R^+\times \R^d,\\
u(0,x)=u_0,\qquad &x\in \R^d.
\end{cases}
\end{equation}

According to \cite{yy}, we can transform Eq. \eqref{E-P} into the following form of transport equations:
\begin{align}\label{E-P1}
\partial_tu+u\cdot \nabla u= Q(u,u)+R(u,u),
\end{align}
where
\begin{align*}
&Q(u,u)=-(I-\De)^{-1}\mathrm{div}\Big(\nabla u\nabla u+\nabla u\nabla u^T-\nabla u^T\nabla u-\nabla u(\mathrm{div} u)+\frac12\mathbf{I}|\nabla u|^{2}\Big),
\\&R(u,u)=-(I-\De)^{-1}\Big(u(\mathrm{div} u)+ u\cd (\na u)^T\Big).
\end{align*}

Eq. \eqref{E-P} was investigated as Euler-Poincar\'{e} equations describes geodesic motion on the diffeomorphism group with respect to the kinetic energy norm in \cite{hs}, which can also be viewed as higher dimensional generalization of the following classical one dimensional Camassa-holm equation (CH):
\begin{align*}
m_{t}+um_{x}+2u_{x}m=0,\ m=u-u_{xx}.
\end{align*}
The CH equation is completely integrable \cite{c3,cgi} and has a bi-Hamiltonian structure \cite{c1,ff}. It also has the solitary waves and peak solitons \cite{cs,cs1}. It is worth mentioning that the peakons show the characteristic for the traveling waves of greatest height and arise as solutions to the free-boundary problem for the incompressible Euler equations over a flat bed, see \cite{c5,c6,ce7,t}. The local well-posedness, global strong solutions, blow-up strong solutions of the CH equations were studied in \cite{c2,ce4,ce2,ce3,d1,lio,rb}. The global weak solutions, global conservative solutions and dissipative solutions also have been investigated in \cite{bc1,bc2,cmo,hr1,xz1}. For the continuity of the solutions map of the CH equations with respect to the initial data, it was only proved in the spaces $C([0,T];B^{s'}_{p,r}(\R))$ for any $s'<s$ with $s>\max\{\frac{3}{2},1+\frac{1}{p}\}$ by many authors. Moreover, Li and Yin in \cite{liy} proved that the index of the continuous dependence for the solutions to the Camassa-Holm type equations in $B^{s}_{p,r}(\R) \big(s>\max\{\frac{3}{2},1+\frac{1}{p}\}\big)$ can up to $s$, which improved many authors' results, especially the Danchin's results in \cite{d1,d2}. Recently, Guo et al. \cite{glmy} obtained the local ill-posedness for a class of shallow water wave equations (such as, the CH, DP, Novikov equations and etc.) in critical Sobolev space $H^{\frac 3 2}(\R)$ and even in Besov space $B^{1+\frac 1 p}_{p,r}(\R)$ with $p\in[1,+\infty],\ r\in(1,+\infty]$. More recently, by use of the compactness argument and Lagrangian coordinate transformatiorather rather than the usual techniques used in \cite{liy}, Ye et al. \cite{yyg}  proved the CH equation is locally well-posed and continuous dependence in Besov spaces $B^{1+\frac 1 p}_{p,1}(\R)$ with $p\in[1,+\infty)$, which implied $B^{1+\frac 1 p}_{p,1}(\R)$ is the critical Besov spaces and the index $\frac{3}{2}$ is not necessary for the Camassa-Holm type equations. Further, the non-uniform continuity of the CH equation has been investigated in many papers, see \cite{hmp,hk,hkm,lyz1,lwyz}.

Eqs. \eqref{E-P} has numerous remarkable features and has been studies by many authors. Chae and Liu \cite{cli} eatablished the local existence of weak solution in $W^{2,p}(\R^d),\ p>d$ and local existence of unique classical solutions in $H^{s}(\R^d),\ s>\frac d 2+3$ for \eqref{E-P}. Then, Li, Yu and Zhai \cite{lyzz} proved that the solutions to \eqref{E-P} with a large class of smooth initial data blows up in finite time or exists globally in time, which settled an open problem raised by Chae and Liu \cite{cli}. Taking advantage of the Littlewood-Paley decomposition theory, Yan and Yin \cite{yy} further discussed the local existence and uniqueness of the solution to \eqref{E-P} in Besov spaces $B^{s}_{p,r}(\R^d)$ with $s>\max\{1+\frac d p, \frac 3 2\}$ and $s=1+\frac d p,1\leq p\leq 2d,r=1$. Recently, Li, Dai and Zhu \cite{ldz} shown that the corresponding solution to \eqref{E-P} is not uniformly constinuous dependence for that the initial data in $H^{s}(\R^d),s>1+\frac d 2$. Also, Li, Dai and Li in \cite{ldl} have shown that the data-to-solution map for \eqref{E-P} is not uniformly continuous dependence in Besov spaces $B^s_{p,r}(\R^d),s>\max\{1+\frac d2,\frac32\}$.  For more results of higher dimensional Camassa-Holm equations, see \cite{luoy,zyl}.

However, the continuity of the data-to-solution map for the higher dimensional Camassa-Holm equations in Besov spaces $B^{s}_{p,\infty}(\R^d),\ s>\max\{1+\frac d p,\frac 3 2\},\ 1\leq p\leq+\infty$ has not been solved yet. In this paper, we will pay our attention to studying the ill-posedness for the higher dimensional Camassa-Holm equations in Besov spaces. The key skill is to construct a initial data.

Now let us state our main result of this paper.
\begin{theorem}\label{th1}
Let $d\geq 2$ and $\sigma>2+\max\big\{1+\frac d p,\frac 3 2\big\}$ with $1\leq p\leq \infty$. There exists a $u_0\in B^\sigma_{p,\infty}(\R^{d})$ and a positive constant $\ep_0$ such that the data-to-solution map $u_0\mapsto u(t)$ of the Cauchy problem \eqref{E-P} satisfies
\begin{align*}
\limsup_{t\to0^+}\|u(t)-u_0\|_{B^\sigma_{p,\infty}}\geq \ep_0.
\end{align*}
\end{theorem}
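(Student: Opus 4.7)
The plan is to exhibit a specific $u_0\in B^\sigma_{p,\infty}(\mathbb{R}^d)$ formed by a smooth ``background velocity'' plus a spatially-localized superposition of high-frequency Fourier modes at every dyadic scale $2^n$ with $n\geq N_0$, and to show that the transport mechanism on the left of \eqref{E-P1} phase-rotates the $n$-th block by an $O(1)$ amount on the short time $t_n\simeq 2^{-n}$. This phase shift will be shown to produce a contribution of order $2^{-n\sigma}$ to $\|\Delta_n(u(t_n)-u_0)\|_{L^p}$, hence of order one to $\|u(t_n)-u_0\|_{B^\sigma_{p,\infty}}$; since $t_n\to 0^+$, the $\limsup$ must be bounded below by some $\varepsilon_0>0$.

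Concretely, I would fix a smooth bump $\phi$ supported near the origin and a smooth function $v_L$ with $v_L(0)\phi(0)\neq 0$, and set
\begin{align*}
u_0(x)\;=\;v_L(x)\,\mathbf{e}_1\;+\;\sum_{n\geq N_0}2^{-n\sigma}\,\phi(x)\cos(2^n x_1)\,\mathbf{e}_d.
\end{align*}
Almost-orthogonality of the summands shows $\|u_0\|_{B^\sigma_{p,\infty}}\leq C$, while $2^{n\sigma}\|\Delta_n u_0\|_{L^p}$ is bounded both above and below uniformly in $n\geq N_0$, so $u_0\in B^\sigma_{p,\infty}\setminus B^\sigma_{p,r}$ for any $r<\infty$. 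Since $u_0\in B^{\sigma'}_{p,1}$ for every $\sigma'<\sigma$, and one may take $\sigma'>\max\{1+d/p,3/2\}$, the local well-posedness result of Yan and Yin~\cite{yy} produces a unique solution $u\in C([0,T];B^{\sigma'}_{p,1})$ of \eqref{E-P1}; a standard Littlewood--Paley energy argument then upgrades this to the uniform a priori bound $\|u\|_{L^\infty_T B^\sigma_{p,\infty}}\leq C\|u_0\|_{B^\sigma_{p,\infty}}$ on a small interval $[0,T]$.

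For the lower bound, apply $\Delta_n$ to the Duhamel identity
\begin{align*}
u(t)-u_0\;=\;-\int_0^t (u\cdot\nabla u)(s)\,ds\;+\;\int_0^t\bigl[Q(u,u)+R(u,u)\bigr](s)\,ds.
\end{align*}
Because $Q$ and $R$ carry the smoothing operator $(I-\Delta)^{-1}$, their $n$-th block is $O(2^{-n(\sigma+1)})$ in $L^p$ and contributes $O(t\,2^{-n})$ after the $2^{n\sigma}$ rescaling. The transport term, on the other hand, contains the piece $v_L\,\mathbf{e}_1\cdot\nabla$ acting on the $n$-th cosine, which differentiates $\cos(2^n x_1)$ and produces
\begin{align*}
-\,2^n\,v_L(x)\phi(x)\cdot 2^{-n\sigma}\sin(2^n x_1)\,\mathbf{e}_d\;+\;\text{l.o.t.},
\end{align*}
whose $L^p$ norm is bounded below by $c_0\,2^{n(1-\sigma)}$ (using $v_L(0)\phi(0)\neq 0$). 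Inserting these estimates yields
\begin{align*}
2^{n\sigma}\|\Delta_n(u(t)-u_0)\|_{L^p}\;\geq\;c_0\,t\,2^n\;-\;2^{n\sigma}\|\Delta_n\mathrm{Rem}(t)\|_{L^p}\;-\;Ct\,2^{-n},
\end{align*}
where $\mathrm{Rem}(t)$ collects terms of order $t^2$ and higher. Choosing $t_n=\delta\,2^{-n}$ with $\delta$ small but fixed makes the main term equal to $c_0\,\delta$, and letting $n\to\infty$ along this subsequence would give $\|u(t_n)-u_0\|_{B^\sigma_{p,\infty}}\geq c_0\delta/2 =: \varepsilon_0$, provided the remainder is controlled uniformly in $n$ by $o(\delta)$.

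The principal obstacle is exactly this uniform-in-$n$ remainder estimate at the critical scale $t\,2^n\simeq 1$: a naive bound $\|\mathrm{Rem}(t)\|_{B^\sigma_{p,\infty}}\leq Ct$ is useless here since one must beat $c_0\delta$, not $c_0\delta\cdot 2^{n\sigma}$. The key is to Taylor-expand the equation to second order in $t$ block-by-block and to prove $2^{n\sigma}\|\Delta_n\mathrm{Rem}(t)\|_{L^p}\leq C(t\,2^n)^2$, which at $t_n$ is $O(\delta^2)$ and thus negligible. The reason this should hold is that each extra time-differentiation of $u$ costs at most one spatial derivative on the high-frequency block, so the leading quadratic term $\partial_t^2 u|_{t=0}\sim (u_0\cdot\nabla)^2 u_0$ at frequency $2^n$ is of order $2^{n(2-\sigma)}$; integrating against $t^2/2$ and multiplying by $2^{n\sigma}$ produces exactly $(t\,2^n)^2$. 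The $Q$ and $R$ contributions only improve this bound because of their smoothing, while low-high paraproducts and commutators $[\Delta_n,u\cdot\nabla]u$ are handled by the standard Bony calculus in Besov spaces together with the a priori $B^\sigma_{p,\infty}$-bound established above.
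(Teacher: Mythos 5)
Your proposal is correct and follows essentially the same route as the paper: a bounded-in-$B^{\sigma}_{p,\infty}$ datum built from dyadic blocks whose low-frequency part transports the $n$-th block, a first-order Taylor expansion $u(t)=u_0+t\mathbf{v}_0+\mathrm{Rem}(t)$ with the remainder measured two derivatives lower (equivalently $2^{n\sigma}\|\Delta_n\mathrm{Rem}(t)\|_{L^p}\lesssim (t2^n)^2$, the paper's Proposition 3.2), the smoothing of $Q,R$ rendering them harmless, and the final choice $t\,2^{n}\approx\varepsilon$. The remaining differences are cosmetic: the paper puts both the low- and high-frequency parts in the first component and extracts the main term from $\tfrac12\partial_1\big(|u_0|^2\big)$ (with carrier frequencies $\tfrac{17}{12}2^{kn}$ to get exact block localization, whereas your frequency $2^n$ sits on a block boundary and needs a slight adjustment), and it runs the local existence and a priori bounds at level $B^{\sigma-2}_{p,\infty}$ instead of your $B^{\sigma'}_{p,1}$ upgrade.
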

\begin{remark}
Theorem \ref{th1} demonstrates the ill-posedness of the higher dimensional CH equation in $B^\sigma_{p,\infty}$. More precisely, there exists a $u_0\in B^\sigma_{p,\infty}$ such that the  corresponding solution to the higher dimensional CH equation that starts from $u_0$ does not converge back to $u_0$ in the sense of $B^\sigma_{p,\infty}$-norm as time goes to zero. Our key argument is to construct a initial data $u_0$.
\end{remark}

The remainder of this paper is organized as follows. In Section \ref{sec2}, we list some notations and recall known results. In Section \ref{sec3}, we present the proof of Theorem \ref{th1} by establishing some technical lemmas and propositions.

\section{Preliminaries}\label{sec2}
\subsection{General Notation}\label{sec2.1}
In the following, we denote by $\star$ the convolution.
Given a Banach space $X$, we denote its norm by $\|\cdot\|_{X}$. For $I\subset\R$, we denote by $\mathcal{C}(I;X)$ the set of continuous functions on $I$ with values in $X$. Sometimes we will denote $L^p(0,T;X)$ by $L_T^pX$.
For all $f\in \mathcal{S}'$, the Fourier transform $\mathcal{F}f$ (also denoted by $\widehat{f}$) is defined by
$$
\mathcal{F}f(\xi)=\widehat{f}(\xi)=\int_{\R^{d}}e^{-ix\cdot\xi}f(x)\dd x \quad\text{for any}\; \xi\in\R^{d}.
$$
\subsection{Littlewood-Paley Analysis}\label{sec2.2}
Next, we recall some useful properties about the Littlewood-Paley decomposition and the Besov spaces.
\begin{proposition}[Littlewood-Paley decomposition, See \cite{bcd}] Let $\mathcal{B}:=\{\xi\in\mathbb{R}^{d}:|\xi|\leq \frac 4 3\}$ and $\mathcal{C}:=\{\xi\in\mathbb{R}^{d}:\frac 3 4\leq|\xi|\leq \frac 8 3\}.$
There exist two radial functions $\chi\in C_c^{\infty}(\mathcal{B})$ and $\varphi\in C_c^{\infty}(\mathcal{C})$ both taking values in $[0,1]$ such that
\begin{align*}
&\chi(\xi)+\sum_{j\geq0}\varphi(2^{-j}\xi)=1, \quad \forall \;  \xi\in \R^d,\\
&\frac{1}{2} \leq \chi^{2}(\xi)+\sum_{j \geq 0} \varphi^{2}(2^{-j} \xi) \leq 1,\quad \forall \;  \xi\in \R^d.
\end{align*}
\end{proposition}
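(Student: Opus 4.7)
The plan is to construct $\chi$ from a standard smooth radial bump and then obtain $\varphi$ by a dyadic dilation difference, following the classical recipe in \cite{bcd}. First I would pick a non-increasing smooth function $\theta:[0,\infty)\to[0,1]$ with $\theta\equiv 1$ on $[0,3/4]$ and $\theta\equiv 0$ on $[4/3,\infty)$; such a $\theta$ is produced by mollifying the indicator of a suitable half-line. Setting $\chi(\xi):=\theta(|\xi|)$ then gives a radial function in $C_c^{\infty}(\mathcal{B})$ taking values in $[0,1]$ which equals $1$ on the ball of radius $3/4$ and vanishes outside $\mathcal{B}$.

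Next I would define $\varphi(\xi):=\chi(\xi/2)-\chi(\xi)$. Radiality and $C^\infty$-smoothness are inherited from $\chi$, and $\varphi\in[0,1]$ because $|\xi/2|\leq|\xi|$ together with the monotonicity of $\theta$ yields $\chi(\xi/2)\geq \chi(\xi)$. For the support, if $|\xi|<3/4$ then $|\xi|,|\xi/2|\in[0,3/4]$, so $\chi(\xi/2)=\chi(\xi)=1$ and $\varphi(\xi)=0$; if $|\xi|>8/3$ then $|\xi/2|>4/3$ and both terms vanish. Hence $\mathrm{supp}\,\varphi\subset\mathcal{C}$.

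The first identity is then a telescoping computation: for any $N\geq 0$,
\[
\chi(\xi)+\sum_{j=0}^{N}\varphi(2^{-j}\xi)=\chi(\xi)+\sum_{j=0}^{N}\bigl(\chi(2^{-j-1}\xi)-\chi(2^{-j}\xi)\bigr)=\chi(2^{-N-1}\xi),
\]
and sending $N\to\infty$ gives $\chi(2^{-N-1}\xi)\to\chi(0)=1$ by continuity of $\chi$, so $\chi(\xi)+\sum_{j\geq0}\varphi(2^{-j}\xi)=1$.

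For the two-sided bound on the squared sum, the main point is bounded overlap of the dyadic supports: $\mathrm{supp}\,\varphi(2^{-j}\cdot)=\{\xi:3\cdot 2^{j-2}\leq|\xi|\leq 2^{j+3}/3\}$, and since $2^{j+3}/3<3\cdot 2^{j}$ and $4/3<3/2$, at each $\xi$ at most two of the functions $\chi,\varphi,\varphi(2^{-1}\cdot),\ldots$ are nonzero. Calling those values $a,b\in[0,1]$ with $a+b=1$ (using the first identity), the upper bound $a^2+b^2\leq a+b=1$ follows from $t^2\leq t$ on $[0,1]$, while the lower bound $a^2+b^2\geq \tfrac12(a+b)^2=\tfrac12$ is the standard Cauchy--Schwarz (or QM--AM) inequality. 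The only genuinely delicate step is the overlap bookkeeping, but it is determined entirely by the numerical choice of the radii $3/4$, $4/3$, $8/3$, so once those are in place the rest is formal.
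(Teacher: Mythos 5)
Your construction is correct and is exactly the standard one from the cited reference (Bahouri--Chemin--Danchin), which the paper itself does not reprove: $\chi$ a radial non-increasing bump, $\varphi(\xi)=\chi(\xi/2)-\chi(\xi)$, the telescoping identity for the partition of unity, and the bounded-overlap plus $a+b=1\Rightarrow \tfrac12\le a^2+b^2\le 1$ argument for the two-sided bound. All the numerical support checks (disjointness of $\varphi(2^{-j}\cdot)$ and $\varphi(2^{-j'}\cdot)$ for $|j-j'|\ge 2$, and of $\chi$ and $\varphi(2^{-j}\cdot)$ for $j\ge 1$) are carried out correctly, so there is nothing to add.
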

\begin{proposition}[Bernstein's inequalities, See \cite{bcd}]\label{bern}
Let $\mathcal{B}$ be a ball and $\mathcal{C}$ be an annulus. A constant $C>0$ exists such that for all $k\in\mathbb{N},\ 1\leq  p\leq q\leq\infty$, and any function $f\in L^{p}(\mathbb{R})$, we have
\begin{align*}
&Supp{\widehat{f}}\subset\lambda\mathcal{B}\Rightarrow\|D^{k}f\|_{L^{q}}=\sup\limits_{|\alpha|=k}\|\partial^{\alpha}f\|_{L^{q}}\leq C^{k+1}\lambda^{k+d(\frac1p-\frac1q)}\|f\|_{L^{p}},\\
&Supp{\widehat{f}}\subset\lambda\mathcal{C}\Rightarrow C^{-k-1}\lambda^{k}\|f\|_{L^{p}}\leq\|D^{k}f\|_{L^{q}}\leq C^{k+1}\lambda^{k}\|f\|_{L^{p}}.
\end{align*}

\end{proposition}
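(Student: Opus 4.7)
The plan is to derive all three inequalities from a single recipe: represent $f$ (or its derivatives) as a convolution with a rescaled Schwartz kernel whose Fourier transform equals $1$ on the spectral support of $f$, then apply Young's convolution inequality. First, I would fix two auxiliary cutoffs $\widetilde\chi\in C_c^\infty(\R^d)$ equal to $1$ on $\mathcal{B}$ and $\widetilde\varphi\in C_c^\infty(\R^d)$ equal to $1$ on $\mathcal{C}$ with support in a slightly enlarged annulus $\mathcal{C}'\subset\R^d\setminus\{0\}$. Rescaling, set $\widetilde\chi_\lambda(\xi):=\widetilde\chi(\xi/\lambda)$ and $\widetilde\varphi_\lambda(\xi):=\widetilde\varphi(\xi/\lambda)$, and let $h_\lambda(x):=\lambda^d h(\lambda x)$ with $h=\mathcal{F}^{-1}\widetilde\chi$ or $\mathcal{F}^{-1}\widetilde\varphi$ as appropriate; this $h$ is Schwartz and $\widehat{h_\lambda}$ coincides with the chosen rescaled cutoff. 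For any $f$ whose spectrum lies in $\lambda\mathcal{B}$ (respectively $\lambda\mathcal{C}$), the tautology $\widehat f=\widehat{h_\lambda}\widehat f$ then Fourier-inverts to $f=h_\lambda\star f$.

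For the upper bounds, differentiating this identity gives $\partial^\alpha f=(\partial^\alpha h_\lambda)\star f$, and a direct scaling computation yields $\|\partial^\alpha h_\lambda\|_{L^r}=\lambda^{|\alpha|+d(1-1/r)}\|\partial^\alpha h\|_{L^r}$. Young's inequality with $1+1/q=1/p+1/r$ forces $d(1-1/r)=d(1/p-1/q)$, whence
\[
\|\partial^\alpha f\|_{L^q}\leq\lambda^{|\alpha|+d(1/p-1/q)}\,\|\partial^\alpha h\|_{L^r}\,\|f\|_{L^p}.
\]
Taking the supremum over $|\alpha|=k$ and absorbing the Schwartz-seminorm factor into $C^{k+1}$—legitimate because $h$ is Schwartz with fixed compactly supported Fourier transform, so repeated differentiation produces a bound growing at most geometrically in $k$—yields simultaneously the ball estimate and the upper half of the annulus estimate.

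The substance is the reverse annulus bound $\lambda^k\|f\|_{L^p}\leq C^{k+1}\|D^k f\|_{L^p}$, and the main obstacle is that naively ``dividing by $|\xi|^k$'' on the Fourier side produces $|\xi|^{-k}$, which fails to be $C^\infty$ at the origin when $k$ is odd. I would sidestep this via a directional partition of unity on $\mathcal{C}$: set $U_j:=\{\xi\in\mathcal{C}':|\xi_j|>|\xi|/(2\sqrt{d})\}$, so that $\bigcup_{j=1}^d U_j\supset\mathcal{C}$, and pick a smooth partition of unity $\{\eta_j\}_{j=1}^d$ subordinate to $\{U_j\}$. On $U_j$ the coordinate $\xi_j$ is bounded away from zero, so $\psi_j(\xi):=i^{-k}\eta_j(\xi)\xi_j^{-k}$ lies in $C_c^\infty(\R^d)$ and satisfies $\sum_j\psi_j(\xi)(i\xi_j)^k=1$ on $\mathcal{C}$. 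Rescaling and applying this identity to any $f$ with $\operatorname{Supp}\widehat f\subset\lambda\mathcal{C}$ produces
\[
\widehat f(\xi)=\lambda^{-k}\sum_{j=1}^d\psi_j(\xi/\lambda)\,\widehat{\partial_j^k f}(\xi),
\]
and inverting the Fourier transform followed by one application of Young's inequality with $r=1$ yields
\[
\|f\|_{L^p}\leq\lambda^{-k}\sum_{j=1}^d\|\mathcal{F}^{-1}\psi_j\|_{L^1}\,\|\partial_j^k f\|_{L^p}\leq C^{k+1}\lambda^{-k}\sup_{|\alpha|=k}\|\partial^\alpha f\|_{L^p}.
\]
The delicate step, and the piece I would execute most carefully, is confirming that $\|\mathcal{F}^{-1}\psi_j\|_{L^1}$ grows at worst geometrically in $k$ despite the factor $\xi_j^{-k}$ inside $\psi_j$: since each $\psi_j$ remains $C^\infty$ with support in the fixed compact set $\overline{U_j}$ on which $|\xi_j|\geq c>0$, Leibniz bounds its derivatives of all orders by $C^{k+1}$, and standard integration by parts then converts these pointwise derivative bounds into the desired geometric $L^1$ bound on $\mathcal{F}^{-1}\psi_j$.
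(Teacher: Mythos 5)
The paper does not actually prove this proposition---it is quoted verbatim from Bahouri--Chemin--Danchin with a citation---so your argument can only be measured against the standard reference proof. Your proof is correct and follows the same overall strategy for the two direct estimates: write $f=h_\lambda\star f$ with $\widehat{h_\lambda}$ a rescaled cutoff equal to $1$ on the spectral support, differentiate, and apply Young's inequality with $1+\frac1q=\frac1p+\frac1r$; the scaling computation $\|\partial^\alpha h_\lambda\|_{L^r}=\lambda^{|\alpha|+d(1-\frac1r)}\|\partial^\alpha h\|_{L^r}$ and the geometric-in-$k$ control of the Schwartz seminorms are exactly right. Where you genuinely depart from the reference is the reverse inequality: BCD exploits the algebraic identity $|\xi|^{2k}=\sum_{|\alpha|=k}\frac{k!}{\alpha!}\xi^{2\alpha}$ to write $\widehat f=\sum_{|\alpha|=k}\frac{k!}{\alpha!}\frac{(-i\xi)^\alpha}{|\xi|^{2k}}\widetilde\varphi(\xi/\lambda)\,\widehat{\partial^\alpha f}$, whereas you use a directional partition of unity $\{\eta_j\}$ on the annulus and invert only the single symbol $\xi_j^{-k}$ on each piece where $|\xi_j|$ is bounded below. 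Both are valid; the BCD route avoids the partition of unity but requires summing over all multi-indices of length $k$ and checking that $\frac{k!}{\alpha!}$ and the $L^1$ norms of the kernels stay geometric, while yours reduces everything to $d$ fixed compactly supported symbols whose derivatives up to order $d+1$ are bounded by $c^{-k-d-1}\cdot\mathrm{poly}(k)\leq C^{k+1}$, which is arguably the cleaner bookkeeping. One small remark: as printed, the annulus estimate in the proposition mixes $L^p$ and $L^q$ without the $\lambda^{d(\frac1p-\frac1q)}$ correction and is dimensionally consistent only for $p=q$; your proof establishes the correct $p=q$ version (the form actually used in the paper), so this is a defect of the quoted statement, not of your argument.
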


\begin{definition}[See \cite{bcd}]
For any $u\in \mathcal{S'}(\mathbb{R}^{d})$, the Littlewood-Paley dyadic blocks ${\Delta}_j$ are defined as follows
\begin{numcases}{\Delta_ju=}
0, & if $j\leq-2$;\nonumber\\
\chi(D)u=\mathcal{F}^{-1}(\chi \mathcal{F}u), & if $j=-1$;\nonumber\\
\varphi(2^{-j}D)u=\mathcal{F}^{-1}\big(\varphi(2^{-j}\cdot)\mathcal{F}u\big), & if $j\geq0$.\nonumber
\end{numcases}
In the nonhomogeneous case, the following Littlewood-Paley decomposition makes sense:
$$
u=\sum_{j\geq-1}{\Delta}_ju,\quad \forall\;u\in \mathcal{S'}(\mathbb{R}^{d}).
$$
\end{definition}
\begin{definition}[See \cite{bcd}]
Let $s\in\mathbb{R}$ and $1\leq p,r\leq \infty$. The nonhomogeneous Besov space $B^{s}_{p,r}(\R^{d})$ is defined by
\begin{align*}
B^{s}_{p,r}(\R^{d}):=\Big\{f\in \mathcal{S}'(\R^{d}):\;\|f\|_{B^{s}_{p,r}(\mathbb{R}^{d})}<\infty\Big\},
\end{align*}
where
\begin{numcases}{\|f\|_{B^{s}_{p,r}(\mathbb{R}^{d})}=}
\left(\sum_{j\geq-1}2^{sjr}\|\Delta_jf\|^r_{L^p(\mathbb{R}^{d})}\right)^{\fr1r}, &if\quad  $1\leq r<\infty$,\nonumber\\
\sup_{j\geq-1}2^{js}\|\Delta_jf\|_{L^p(\mathbb{R}^{d})}, &if\quad  $r=\infty$.\nonumber
\end{numcases}
For simplicity, we always write $u\in B^{s}_{p,r}(\R^{d})$ and $\nabla u\in B^{s}_{p,r}(\R^{d})$ standing for $u\in (B^{s}_{p,r}(\R^{d}))^{d}$ and $\nabla u\in (B^{s}_{p,r}(\R^{d}))^{d^{2}}$, respectively.
\end{definition}
\begin{remark}\label{re3}
It should be emphasized that the following embedding will be often used implicity:
$$B^{s_{1}}_{p,q}(\R^{d})\hookrightarrow B^{s_{2}}_{p,r}(\R^{d})\ \text{for}\;s_{1}>s_{2}\ \text{or}\  s_{1}=s_{2},\ 1\leq q\leq r\leq\infty.$$
\end{remark}
Finally, we give some important lemmas which will be also often used throughout the paper.
\begin{lemma}[See \cite{bcd,yy}]\label{le1}
Let $(p,r)\in[1, \infty]^2$ and $s>\max\big\{1+\frac d p,\frac32\big\}$. Then
\begin{align*}
&\|uv\|_{B^{s-2}_{p,r}(\R^{d})}\leq C\|u\|_{B^{s-2}_{p,r}(\R^{d})}\|v\|_{B^{s-1}_{p,r}(\R^{d})}.
\end{align*}
Hence, for the terms $Q(u,u),\ Q(v,v),\ R(u,u)$ and $R(v,v)$, we have
\begin{align*}
&\|{Q}(u,u)-{Q}(v,v)\|_{B^{s-1}_{p,r}(\R^{d})}\leq C\|u-v\|_{B^{s-1}_{p,r}(\R^{d})}\big(\|u\|_{B^{s}_{p,r}(\R^{d})}+\|v\|_{B^{s}_{p,r}(\R^{d})}\big),\\
&\|{R}(u,u)-{R}(v,v)\|_{B^{s-1}_{p,r}(\R^{d})}\leq C\|u-v\|_{B^{s-1}_{p,r}(\R^{d})}\big(\|u\|_{B^{s}_{p,r}(\R)}+\|v\|_{B^{s}_{p,r}(\R^{d})}\big).
\end{align*}
\end{lemma}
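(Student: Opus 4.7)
The heart of the lemma is the bilinear Besov estimate; once that is in place, the Lipschitz bounds for $Q$ and $R$ follow from telescoping identities together with the mapping properties of $(I-\Delta)^{-1}$ and $(I-\Delta)^{-1}\mathrm{div}$. I would therefore first prove the product estimate via Bony's paraproduct decomposition $uv = T_u v + T_v u + \mathcal{R}(u,v)$.

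For $T_v u$, the classical paraproduct bound yields $\|T_v u\|_{B^{s-2}_{p,r}}\lesssim \|v\|_{L^\infty}\|u\|_{B^{s-2}_{p,r}}$, and since $s-1>d/p$ the embedding $B^{s-1}_{p,r}\hookrightarrow L^\infty$ closes this piece. For $T_u v$, where the low-regularity factor is $u$, I would use the sharp paraproduct estimate with negative exponent, $\|T_u v\|_{B^{\sigma+t}_{p,r}}\lesssim \|u\|_{B^t_{\infty,\infty}}\|v\|_{B^\sigma_{p,r}}$ for $t<0$, with $\sigma=s-1$ and $t=-1$, then absorb $\|u\|_{B^{-1}_{\infty,\infty}}$ into $\|u\|_{B^{s-2}_{p,r}}$ via the Sobolev embedding $B^{s-2}_{p,r}\hookrightarrow B^{s-2-d/p}_{\infty,\infty}\hookrightarrow B^{-1}_{\infty,\infty}$, which is valid precisely because $s-2-d/p\geq -1$. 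For the remainder $\mathcal{R}(u,v)$ I would exploit $(s-2)+(s-1)=2s-3>0$ (ensured by $s>3/2$); the standard bound places $\mathcal{R}(u,v)$ in $B^{2s-3-d/p}_{p,r}$, and the extra regularity is absorbed by the embedding $B^{2s-3-d/p}_{p,r}\hookrightarrow B^{s-2}_{p,r}$, again using $s\geq 1+d/p$.

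With the bilinear estimate in hand, the $Q$-difference is handled by telescoping $\nabla u\otimes\nabla u-\nabla v\otimes\nabla v = \nabla(u-v)\otimes\nabla u+\nabla v\otimes\nabla(u-v)$ (and analogous identities for the other quadratic pieces of $Q$), applying the product estimate with the higher-regularity factor $\nabla u$ or $\nabla v$ taken in $B^{s-1}_{p,r}$ and the low-regularity factor $\nabla(u-v)$ in $B^{s-2}_{p,r}$. This yields the interior bracket in $B^{s-2}_{p,r}$ with norm bounded by $\|u-v\|_{B^{s-1}_{p,r}}\bigl(\|u\|_{B^s_{p,r}}+\|v\|_{B^s_{p,r}}\bigr)$, and since $(I-\Delta)^{-1}\mathrm{div}$ is a Fourier multiplier of order $-1$, it maps $B^{s-2}_{p,r}$ continuously into $B^{s-1}_{p,r}$. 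The $R$-difference is treated identically: decompose $u\,\mathrm{div}\,u-v\,\mathrm{div}\,v=(u-v)\mathrm{div}\,u+v\,\mathrm{div}(u-v)$, control each summand in $B^{s-2}_{p,r}$ by the bilinear estimate, and apply $(I-\Delta)^{-1}$ (of order $-2$) to conclude in $B^s_{p,r}\hookrightarrow B^{s-1}_{p,r}$.

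The main obstacle, as I see it, is the index bookkeeping for $T_u v$: because $u$ sits at the lower regularity $s-2$, one cannot rely on an $L^\infty$ bound for $u$ and must instead invoke a negative-exponent paraproduct estimate chained with a Sobolev embedding that only works at the threshold $s>1+d/p$. The endpoint case $r=\infty$ also deserves care, since $B^{s-1}_{p,\infty}\hookrightarrow L^\infty$ needs the strict inequality $s-1>d/p$; fortunately this is exactly what the hypothesis provides.
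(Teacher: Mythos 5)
The paper never proves this lemma — it is simply quoted from \cite{bcd,yy} — so there is no in-paper argument to compare against; your Bony-decomposition proof is the standard route one finds in those references. The two paraproduct pieces and the treatment of $Q$ and $R$ are correct as written: $T_vu$ via $B^{s-1}_{p,r}\hookrightarrow L^\infty$ (which only needs the strict inequality $s-1>\frac dp$, so the $r=\infty$ endpoint is harmless), $T_uv$ via the negative-order paraproduct bound together with $\|u\|_{B^{-1}_{\infty,\infty}}\lesssim\|u\|_{B^{s-2}_{p,r}}$ (which needs exactly $s\geq 1+\frac dp$), and the telescoping identities combined with the order $-1$ gain of $(I-\Delta)^{-1}\mathrm{div}$ and the order $-2$ gain of $(I-\Delta)^{-1}$ give precisely the two stated Lipschitz bounds.

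The one step that does not hold together as written is the remainder. You invoke the hypothesis $(s-2)+(s-1)=2s-3>0$ but then assert that $\mathcal{R}(u,v)$ lands in $B^{2s-3-d/p}_{p,r}$; these belong to two different versions of the remainder estimate. If you first embed one factor into an $L^\infty$-based space (losing $\frac dp$ derivatives), the hypothesis of the remainder theorem becomes $(s-2)+(s-1-\frac dp)=2s-3-\frac dp>0$, i.e.\ $s>\frac32+\frac{d}{2p}$, which is \emph{not} implied by $s>\max\{1+\frac dp,\frac32\}$ when $0<\frac dp<1$ (take $d=2$, $p=4$, $s=1.6$). If instead you keep both factors in $L^p$-based spaces, you get $\mathcal{R}:B^{s-2}_{p,r}\times B^{s-1}_{p,r}\to B^{2s-3}_{p/2,r}$ under the condition $2s-3>0$, and the $\frac dp$ loss then comes from the subsequent embedding $B^{2s-3}_{p/2,r}\hookrightarrow B^{2s-3-d/p}_{p,r}$ — but this pairing requires $\frac1p+\frac1p\leq1$, i.e.\ $p\geq2$. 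So your argument, read charitably, is complete only for $p\in[2,\infty]$. For $p\in[1,2)$ you need a separate (easy) case: since $d\geq2$ there, $s>1+\frac dp\geq 2$, so $s-2>0$ and one can use $\mathcal{R}:B^{s-2}_{p,r}\times B^{0}_{\infty,\infty}\to B^{s-2}_{p,r}$ with $v\in B^{s-1}_{p,r}\hookrightarrow L^\infty\hookrightarrow B^{0}_{\infty,\infty}$. With that case split added, the proof is complete in the regime $d\geq2$ used throughout the paper.
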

\begin{lemma}[See \cite{bcd}]\label{le2}
For $(p,r)\in[1, \infty]^2$ and $s>0$, $B^s_{p,r}(\R^{d})\cap L^\infty(\R^{d})$ is an algebra. Moreover, for any $u,v \in B^s_{p,r}(\R^{d})\cap L^\infty(\R^{d})$, we have
\begin{align*}
&\|uv\|_{B^{s}_{p,r}(\R^{d})}\leq C\big(\|u\|_{B^{s}_{p,r}(\R^{d})}\|v\|_{L^\infty(\R^{d})}+\|v\|_{B^{s}_{p,r}(\R^{d})}\|u\|_{L^\infty(\R^{d})}\big).
\end{align*}
\end{lemma}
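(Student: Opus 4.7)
The plan is to apply Bony's paradifferential decomposition
\[
uv = T_u v + T_v u + R(u,v),
\]
where $T_u v := \sum_{j}S_{j-1}u\,\Delta_j v$ with $S_{j-1}u := \sum_{k\le j-2}\Delta_k u$ is the paraproduct, and $R(u,v) := \sum_{|j-k|\le 1}\Delta_j u\,\Delta_k v$ is the remainder. I would then estimate each of the three pieces in $B^s_{p,r}$, each time pairing one factor with its $L^\infty$ norm and the other with its $B^s_{p,r}$ norm.

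For the paraproduct $T_u v$, each summand $S_{j-1}u\cdot \Delta_j v$ has Fourier support in a fixed dyadic annulus around $|\xi|\sim 2^j$, so $\Delta_N(S_{j-1}u\cdot\Delta_j v)=0$ unless $|N-j|\le N_0$ for some universal $N_0$. Since the convolution kernel defining $S_{j-1}$ is an $L^1$-bounded approximate identity, $\|S_{j-1}u\|_{L^\infty}\le C\|u\|_{L^\infty}$, and Hölder yields
\[
\|\Delta_N T_u v\|_{L^p}\lesssim \|u\|_{L^\infty}\sum_{|N-j|\le N_0}\|\Delta_j v\|_{L^p}.
\]
Multiplying by $2^{Ns}$ and taking the $\ell^r_N$ norm gives $\|T_u v\|_{B^s_{p,r}}\lesssim \|u\|_{L^\infty}\|v\|_{B^s_{p,r}}$. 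Exchanging the roles of $u$ and $v$ gives $\|T_v u\|_{B^s_{p,r}}\lesssim \|v\|_{L^\infty}\|u\|_{B^s_{p,r}}$. Notice that no positivity of $s$ is needed here.

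The delicate piece is the remainder $R(u,v)$, and it is here that the hypothesis $s>0$ enters. Each term $\Delta_q u\,\widetilde\Delta_q v$ (with $\widetilde\Delta_q:=\Delta_{q-1}+\Delta_q+\Delta_{q+1}$) has Fourier support in a \emph{ball} of radius $\sim 2^q$ rather than an annulus, so $\Delta_N(\Delta_q u\,\widetilde\Delta_q v)\neq 0$ only forces $q\ge N-N_0$. Hölder with one factor in $L^\infty$ and one in $L^p$ gives
\[
\|\Delta_N R(u,v)\|_{L^p}\lesssim \sum_{q\ge N-N_0}\|\Delta_q u\|_{L^p}\,\|\widetilde\Delta_q v\|_{L^\infty}.
\]
Writing $\|\Delta_q u\|_{L^p}\le 2^{-qs}c_q\|u\|_{B^s_{p,r}}$ with $\|(c_q)\|_{\ell^r}=1$, I obtain
\[
2^{Ns}\|\Delta_N R(u,v)\|_{L^p}\lesssim \|v\|_{L^\infty}\|u\|_{B^s_{p,r}}\sum_{q\ge N-N_0}2^{(N-q)s}c_q.
\]
Because $s>0$, the kernel $\bigl(2^{-ks}\mathbf{1}_{k\ge -N_0}\bigr)_{k\in\mathbb{Z}}$ is absolutely summable, so Young's convolution inequality on $\ell^r$ yields $\|R(u,v)\|_{B^s_{p,r}}\lesssim \|u\|_{B^s_{p,r}}\|v\|_{L^\infty}$. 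The symmetric Hölder choice (sending $\Delta_q u$ into $L^\infty$ and $\widetilde\Delta_q v$ into $L^p$) produces the companion bound with the roles of $u$ and $v$ exchanged.

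Summing the contributions of $T_u v$, $T_v u$, and $R(u,v)$ yields the stated inequality, and algebra membership of $B^s_{p,r}\cap L^\infty$ is then immediate. The main obstacle is precisely the remainder: unlike the paraproduct terms, its summands are not annulus-localized, forcing one to control a dyadic tail of Besov coefficients, and this is exactly the step in which positivity of $s$ is exploited via the geometric decay $\sum_{k\ge 0}2^{-ks}<\infty$. For $s\le 0$ this sum diverges and the algebra property itself fails, so the hypothesis cannot be relaxed.
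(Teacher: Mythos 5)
Your proof is correct and is exactly the standard argument: the paper does not prove this lemma but cites Bahouri–Chemin–Danchin, where the tame estimate is obtained by the same Bony decomposition $uv=T_uv+T_vu+R(u,v)$, with the paraproducts controlled by $\|\cdot\|_{L^\infty}$ on the low-frequency factor and the remainder handled via the geometric tail sum that requires $s>0$. Nothing further is needed.
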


In the paper, we also need some estimates for the following transport equation:
\begin{align}\label{transport}
\left\{\begin{array}{l}
\partial_{t}f+v\cdot \nabla f=g,\ (t,x)\in\mathbb{R}^{+}\times\mathbb{R}^{d}, \\
f(0,x)=f_0(x),\quad x\in\mathbb{R}^{d}.
\end{array}\right.
\end{align}
\begin{lemma}[See \cite{bcd}]\label{existence}
Let $d\in\mathbb{N}^{+},\ 1\leq p\leq\infty,\ 1\leq r\leq\infty$ and $ \theta> -\min(\frac d {p}, 1-\frac d {p})$. Let $f_0\in B^{\theta}_{p,r}(\R^{d})$, $g\in L^1(0,T;B^{\theta}_{p,r}(\R^{d}))$, $v\in L^\rho(0,T;B^{-M}_{\infty,\infty}(\R^{d}))$ for some $\rho>1$ and $M>0$, and
\begin{align*}
\begin{array}{ll}
\nabla v\in L^1(0,T;B^{\frac d {p}}_{p,\infty}(\R^{d})\cap L^{\infty}(\R^{d})), &\ \text{if}\ \theta<1+\frac d {p}, \\
\nabla v\in L^1(0,T;B^{\theta-1}_{p,r}(\R^{d})), &\ \text{if}\ \theta>1+\frac{d}{p}\ (or\ \theta=1+\frac d {p},\ r=1).
\end{array}	
\end{align*}
Then the problem \eqref{transport} has a unique solution $f$ in
\begin{itemize}
\item [-] the space $\mathcal{C}([0,T];B^{\theta}_{p,r}(\R^{d}))$, if $r<\infty$,
\item [-] the space $\big(\bigcap_{{\theta}'<\theta}\mathcal{C}([0,T];B^{{\theta}'}_{p,\infty}(\R^{d}))\big)\bigcap \mathcal{C}_w([0,T];B^{\theta}_{p,\infty}(\R^{d}))$, if $r=\infty$.
\end{itemize}
\end{lemma}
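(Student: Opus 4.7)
The plan is to follow the classical scheme for linear transport equations in Besov spaces, as in \cite{bcd}: derive an \textit{a priori} estimate in $B^{\theta}_{p,r}$, construct a solution by regularization and compactness, and close with uniqueness. The key ingredients are Littlewood–Paley localization together with a commutator estimate whose validity dictates the range $\theta > -\min(d/p,\,1-d/p)$.

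First I would localize \eqref{transport} by applying $\Delta_j$, obtaining
\begin{align*}
\partial_t \Delta_j f + v\cdot \nabla \Delta_j f \;=\; \Delta_j g + R_j, \qquad R_j := [v\cdot\nabla,\,\Delta_j] f.
\end{align*}
An $L^p$ energy estimate on $\Delta_j f$ along the flow of $v$ (picking up a term $\tfrac{1}{p}\|\div v\|_{L^\infty}\|\Delta_j f\|_{L^p}$ from the transport of the density) yields
\begin{align*}
\|\Delta_j f(t)\|_{L^p} \;\leq\; \|\Delta_j f_0\|_{L^p} + \int_0^t \bigl(\|\Delta_j g\|_{L^p} + \|R_j\|_{L^p} + \tfrac{1}{p}\|\div v\|_{L^\infty}\|\Delta_j f\|_{L^p}\bigr)\,d\tau.
\end{align*}
The decisive step is Bony's commutator estimate, which under the stated hypotheses on $\nabla v$ gives
\begin{align*}
2^{j\theta}\|R_j\|_{L^p} \;\leq\; C c_j(t)\,V'(t)\,\|f(t)\|_{B^{\theta}_{p,r}}, \qquad (c_j(t))_{j}\in\ell^r,\ \|(c_j(t))\|_{\ell^r}\leq 1,
\end{align*}
where $V'(t)$ denotes $\|\nabla v\|_{B^{d/p}_{p,\infty}\cap L^\infty}$ when $\theta<1+d/p$ and $\|\nabla v\|_{B^{\theta-1}_{p,r}}$ otherwise. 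This is precisely where the lower bound $\theta > -\min(d/p,1-d/p)$ is needed, since the paraproduct remainder $\mathcal{R}(v,\nabla f)$ converges in $B^{\theta}_{p,r}$ only in this range. Taking $\ell^r$ norms in $j$ and invoking Gronwall's lemma produces
\begin{align*}
\|f\|_{L^\infty_T B^{\theta}_{p,r}} \;\leq\; e^{CV(T)}\Bigl(\|f_0\|_{B^{\theta}_{p,r}} + \int_0^T e^{-CV(\tau)}\|g(\tau)\|_{B^{\theta}_{p,r}}\,d\tau\Bigr).
\end{align*}

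Existence is obtained by Friedrichs regularization: mollify $v,f_0,g$ to smooth data $(v^n,f_0^n,g^n)$, solve the resulting smooth linear transport equation by the flow of $v^n$, and note that the \textit{a priori} estimate above is uniform in $n$. The very weak assumption $v\in L^\rho_T B^{-M}_{\infty,\infty}$ is used only to identify limits distributionally; combined with uniform bounds on $f^n$ in $L^\infty_T B^{\theta}_{p,r}$ and an Aubin–Lions argument producing strong convergence in $\mathcal{C}([0,T];B^{\theta'}_{p,\infty})$ for any $\theta'<\theta$, one passes to the limit in $v^n\cdot\nabla f^n$. For $r<\infty$, strong time continuity in $B^{\theta}_{p,r}$ follows from density of smooth data; for $r=\infty$, one only recovers weak-$\ast$ continuity in $B^{\theta}_{p,\infty}$ together with strong continuity in each $B^{\theta'}_{p,\infty}$, $\theta'<\theta$, which matches the statement. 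Uniqueness is a direct consequence of the \textit{a priori} estimate applied to the difference $w=f_1-f_2$, which solves the homogeneous problem with zero data, at a regularity $\theta^*$ slightly below $\theta$ where the commutator estimate still closes.

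The main obstacle I expect is the commutator estimate at the prescribed endpoint and the matching of the two distinct hypotheses on $\nabla v$ across the threshold $\theta = 1+d/p$: one must decompose $[v\cdot\nabla,\Delta_j]f$ via paraproducts into $T_v\nabla f$, $T_{\nabla f}v$, and $\mathcal{R}(v,\nabla f)$, and control each piece in the right regime, with the remainder $\mathcal{R}$ responsible for the sharp lower bound on $\theta$. The secondary subtlety is the $r=\infty$ case, where compactness in $B^{\theta}_{p,\infty}$ fails and one must argue along a subsequence in lower Besov indices while separately establishing weak continuity in $B^{\theta}_{p,\infty}$.
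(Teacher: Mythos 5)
This lemma is quoted in the paper without proof --- it is cited directly from Bahouri--Chemin--Danchin \cite{bcd} --- so there is no in-paper argument to compare against. Your sketch correctly reconstructs the standard proof from that reference (Littlewood--Paley localization, the $L^p$ estimate along the flow with the $\frac1p\|\mathrm{div}\, v\|_{L^\infty}$ correction, the Bony commutator estimate whose remainder term forces the lower bound on $\theta$, Gronwall, Friedrichs regularization with compactness, and uniqueness from the a priori estimate), including the correct treatment of the $r=\infty$ case where only weak continuity in $B^{\theta}_{p,\infty}$ is recovered.
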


\begin{lemma}[See \cite{bcd,liy}]\label{priori estimate}
Let $d\in\mathbb{N}^{+},\ 1\leq p,r\leq\infty,\ \theta>-\min(\frac{d}{p},1-\frac{d}{p}).$
There exists a constant $C$ such that for all solutions $f\in L^{\infty}(0,T;B^{\theta}_{p,r}(\R^{d}))$ of \eqref{transport} with initial data $f_0$ in $B^{\theta}_{p,r}(\R^{d})$ and $g$ in $L^1(0,T;B^{\theta}_{p,r}(\R^{d}))$, we have, for a.e. $t\in[0,T]$,	
$$ \|f(t)\|_{B^{\theta}_{p,r}(\R^{d})}\leq \|f_0\|_{B^{\theta}_{p,r}(\R^{d})}+\int_0^t\|g(t')\|_{B^{\theta}_{p,r}(\R^{d})}\dd t'+\int_0^t V'(t')\|f(t')\|_{B^{\theta}_{p,r}(\R^{d})}\dd t' $$
or
$$ \|f(t)\|_{B^{\theta}_{p,r}(\R^{d})}\leq e^{CV(t)}\Big(\|f_0\|_{B^{\theta}_{p,r}(\R^{d})}+\int_0^t e^{-CV(t')}\|g(t')\|_{B^{\theta}_{p,r}(\R^{d})}\dd t'\Big) $$
with
\begin{equation*}
V'(t)=\left\{\begin{array}{ll}
\|\nabla v(t)\|_{B^{\frac d p}_{p,\infty}(\R^{d})\cap L^{\infty}(\R^{d})},\ &\text{if}\ \theta<1+\frac{d}{p}, \\
\|\nabla v(t)\|_{B^{\theta-1}_{p,r}(\R^{d})},\ &\text{if}\ \theta>1+\frac{d}{p}\ (\text{or}\ \theta=1+\frac{d}{p},\ r=1).
\end{array}\right.
\end{equation*}
If $f=v$, then for all $\theta>0$, $V'(t)=\|\nabla v(t)\|_{L^{\infty}(\R^{d})}$.
\end{lemma}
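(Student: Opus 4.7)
The plan is to construct $u_0$ as a smooth drift along $\vec{e}_1$ superposed with a lacunary train of $\vec{e}_2$-oriented wave packets at dyadic frequencies $2^n$, and to exploit the transport phase-shift mechanism: a wave packet of frequency $2^n$ carried by a unit-speed drift picks up an $O(1)$ phase in time $t_n\sim 2^{-n}$. Since the $B^\sigma_{p,\infty}$-norm is a supremum over dyadic blocks, this yields an $O(1)$ contribution to $\|u(t_n)-u_0\|_{B^\sigma_{p,\infty}}$ uniformly in $n$, and $t_n\to 0^+$ gives the claimed $\limsup$.

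\textbf{Construction and reduction.} With $d\geq 2$, $\lambda_0=17/12$, $N_0$ large, and $\phi\in\mathcal{S}(\R^d)$ a Schwartz bump with $\widehat\phi\in C_c^\infty$ supported in a ball so small that each summand below lies exactly in the annulus of $\Delta_n$, and with $\phi(0)=1$, I would take
$$u_0(x):=\phi(x)\,\vec{e}_1+\sum_{n\geq N_0}2^{-n\sigma}\phi(x)\sin(\lambda_0 2^n x_1)\,\vec{e}_2.$$
A direct computation of the dyadic blocks gives $u_0\in B^\sigma_{p,\infty}$, and the local well-posedness of Yan-Yin \cite{yy} furnishes $u\in L^\infty(0,T;B^\sigma_{p,\infty})$ with $\|u\|_{L^\infty_T B^\sigma_{p,\infty}}\leq 2\|u_0\|_{B^\sigma_{p,\infty}}$. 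Projecting \eqref{E-P1} onto $\vec{e}_2$ produces $\partial_t u_2+u\cdot\nabla u_2=F_2(u)$ with $F_2$ the second component of $Q+R$. Integrating along the characteristic flow $X(t,x)$ of $u$ gives
$$u_2(t,x)=u_{0,2}(X^{-1}(t,x))+\int_0^t F_2(u)(s,X(s,X^{-1}(t,x)))\,\dd s,$$
while a Taylor expansion of the flow yields $X^{-1}(t,x)=x-t\phi(x)\vec{e}_1+\mathcal{O}(t^2)$ measured in $B^{\sigma-1}_{p,\infty}$ via Lemma \ref{priori estimate}. Lemma \ref{le1} combined with the two-derivative smoothing in $(I-\Delta)^{-1}$ shows the $F_2$-integral and the $\mathcal{O}(t^2)$ correction contribute only $o(2^{-n\sigma})$ to $\|\Delta_n(u_2(t)-u_{0,2})\|_{L^p}$ at the scales relevant below.

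\textbf{Lower bound and main obstacle.} At $t=t_n:=\pi/(2\lambda_0 2^n)$, applying $\sin\alpha-\sin\beta=2\cos\tfrac{\alpha+\beta}{2}\sin\tfrac{\alpha-\beta}{2}$ to the leading piece $2^{-n\sigma}\phi(x)[\sin(\lambda_0 2^n(x_1-t_n\phi(x)))-\sin(\lambda_0 2^n x_1)]$ gives the expression
$$-2\cdot 2^{-n\sigma}\phi(x)\sin\!\Big(\tfrac{\pi\phi(x)}{4}\Big)\cos\!\Big(\lambda_0 2^n x_1-\tfrac{\pi\phi(x)}{4}\Big),$$
whose $L^p$-norm is bounded below by $c_0\,2^{-n\sigma}$ uniformly in $n$ for every $p\in[1,\infty]$ (the function $\phi(x)\sin(\pi\phi(x)/4)$ is a compactly supported bump with positive value at the origin, multiplied by an oscillatory cosine factor whose $L^p$-average admits a positive large-frequency limit). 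Hence $2^{n\sigma}\|\Delta_n(u(t_n)-u_0)\|_{L^p}\geq c_0-o(1)\geq c_0/2$ for $n$ large, producing $\limsup_{t\to 0^+}\|u(t)-u_0\|_{B^\sigma_{p,\infty}}\geq\ep_0:=c_0/2$. The main obstacle is making the error control rigorous: one must verify that the commutator $[\Delta_n,u\cdot\nabla]u_2$, the quadratic flow-map remainder, and the $F_2$-integral each contribute $o(2^{-n\sigma})$ in $L^p$ at scale $n$ as $n\to\infty$. These estimates rely on Lemmas \ref{le1} and \ref{priori estimate}, on the $(I-\Delta)^{-1}$-smoothing structure of $Q$ and $R$, and on the lacunary structure of $u_0$, which guarantees that each summand projects cleanly onto a single dyadic block and hence that the phase-shift contribution is not polluted by low-frequency interactions.
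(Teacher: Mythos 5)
Your proposal does not address the statement you were asked to prove. The statement is Lemma~\ref{priori estimate}, the classical a priori estimate for the \emph{linear} transport equation \eqref{transport}: given a vector field $v$ and a source $g$, one must bound $\|f(t)\|_{B^{\theta}_{p,r}}$ by the initial data and the time integrals involving $V'(t)$, with the stated dichotomy on $V'$ according to whether $\theta<1+\frac{d}{p}$ or $\theta>1+\frac{d}{p}$, and the improvement $V'(t)=\|\nabla v(t)\|_{L^\infty}$ when $f=v$. What you have written instead is a sketch of the main ill-posedness result (Theorem~\ref{th1}): you construct lacunary initial data, invoke the local well-posedness of Yan--Yin, and run a phase-shift argument along characteristics. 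None of that is relevant to the lemma; indeed your sketch \emph{cites} Lemma~\ref{priori estimate} as a tool, so it cannot serve as its proof without circularity.

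A correct proof of the lemma (this is Theorem 3.14/3.18 in Bahouri--Chemin--Danchin, refined by Li--Yin) runs along entirely different lines: apply $\Delta_j$ to \eqref{transport} to get $\partial_t\Delta_j f+v\cdot\nabla\Delta_j f=\Delta_j g+[v\cdot\nabla,\Delta_j]f$; take the $L^p$ norm along the flow of $v$ (picking up a factor $e^{C\int_0^t\|\operatorname{div}v\|_{L^\infty}}$, or arguing directly for $p=\infty$); estimate the commutator by
$\big\|2^{j\theta}\|[v\cdot\nabla,\Delta_j]f\|_{L^p}\big\|_{\ell^r}\leq CV'(t)\|f\|_{B^{\theta}_{p,r}}$
using Bony's decomposition, which is exactly where the two regimes for $V'$ and the restriction $\theta>-\min(\frac{d}{p},1-\frac{d}{p})$ enter; then sum over $j$ in $\ell^r$ and conclude with Gronwall's inequality to pass from the first displayed inequality to the second. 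The case $f=v$ uses the additional cancellation in the commutator $[v\cdot\nabla,\Delta_j]v$ that removes the need for $\nabla v$ in a Besov norm and leaves only $\|\nabla v\|_{L^\infty}$. Since the paper itself simply cites this lemma from \cite{bcd,liy} without proof, the honest assessment is that your submission proves a different (and later) statement, and even for that statement it takes a characteristics/phase-shift route rather than the paper's second-order Taylor expansion $u(t)=u_0+t\mathbf{v}_0+\mathbf{w}(t,u_0)$ combined with the lower bound on $\|\Delta_{kn}(|u_0|^2)\|_{L^p}$.
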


\section{Proof of Theorem \ref{th1}}\label{sec3}
\subsection{Construction of Initial Data}\label{sec3.1}
We need to introduce smooth, radial cut-off functions to localize the frequency region. Precisely,
let $\widehat{\phi}\in \mathcal{C}^\infty_0(\mathbb{R})$ be an even, real-valued and non-negative function on $\R$ and satisfy
\begin{numcases}{\widehat{\phi}(\xi)=}
1,&if $|\xi|\leq \frac{1}{4}$,\nonumber\\
0,&if $|\xi|\geq \frac{1}{2}$.\nonumber
\end{numcases}
In \cite{lyz2}, it has been verified that for $f_n:=\phi(x_1)\cos \big(\frac{17}{12}2^{n}x_1\big)\phi(x_2)\cdot\phi(x_n)$ and $n\geq 2$,
\begin{numcases}{\Delta_j(f_n)=}
f_n, &if $j=n$,\nonumber\\
0, &if $j\neq n$.\label{fn}
\end{numcases}
We can obtain the similar result:
\begin{lemma}\label{le4}
Let $6\leq k,n\in \mathbb{N}^+$. Define the function $g^{k}_{m,n}(x)$ by
$$g^{k}_{m,n}(x):=\phi(x_1)\cos \bi(\frac{17}{12}\big(2^{kn}\pm2^{km}\big)x_1\bi)\phi(x_2)\cdots \phi(x_d)\quad\text{with}\quad 0\leq m\leq n-1.$$
Then we have
\begin{numcases}
{\Delta_j(g^k_{m,n})=}
g^k_{m,n}, &if $j=kn$,\nonumber\\
0, &if $j\neq kn$.\nonumber
\end{numcases}
\end{lemma}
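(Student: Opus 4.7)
The plan is to establish this by an explicit Fourier-support argument: compute $\widehat{g^{k}_{m,n}}$, localize it inside a thin annulus of radius comparable to $2^{kn}$, and then invoke the fact that the Littlewood--Paley partition of unity reduces to the single term $\varphi(2^{-kn}\xi)\equiv 1$ there. This reproduces the mechanism that gave the identity \eqref{fn} in \cite{lyz2}; the only novelty is the presence of the shift $\pm 2^{km}$ in the carrier frequency.

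Writing $\cos(\alpha x_1)=\tfrac12(e^{i\alpha x_1}+e^{-i\alpha x_1})$ with $\alpha=\tfrac{17}{12}(2^{kn}\pm 2^{km})$ and exploiting the tensor-product structure of $g^{k}_{m,n}$, one obtains
\begin{equation*}
\widehat{g^{k}_{m,n}}(\xi)=\tfrac12\bigl(\widehat{\phi}(\xi_1-\alpha)+\widehat{\phi}(\xi_1+\alpha)\bigr)\prod_{i=2}^{d}\widehat{\phi}(\xi_i).
\end{equation*}
Since $\mathrm{supp}\,\widehat{\phi}\subset[-\tfrac12,\tfrac12]$, the support of $\widehat{g^{k}_{m,n}}$ lies inside the set $\{\xi\in\mathbb{R}^d : \alpha-\tfrac12\leq |\xi_1|\leq \alpha+\tfrac12,\ |\xi_i|\leq\tfrac12\text{ for all }i\geq 2\}$. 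Using $0\leq m\leq n-1$ and $k\geq 6$ (so $2^{-k}\leq \tfrac{1}{64}$),
\begin{equation*}
\tfrac{17\cdot 63}{12\cdot 64}\,2^{kn}\ \leq\ \alpha\ \leq\ \tfrac{17\cdot 65}{12\cdot 64}\,2^{kn};
\end{equation*}
since $\tfrac{17\cdot 63}{12\cdot 64}>\tfrac{4}{3}$ and $\tfrac{17\cdot 65}{12\cdot 64}<\tfrac{3}{2}$, a short numerical check with $2^{kn}\geq 2^{6}$ then yields $\tfrac{4}{3}\,2^{kn}<|\xi|<\tfrac{3}{2}\,2^{kn}$ throughout the support.

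On this annulus every term of the partition $\chi(\xi)+\sum_{j\geq 0}\varphi(2^{-j}\xi)\equiv 1$ vanishes except $\varphi(2^{-kn}\xi)$: the upper edge of $\mathrm{supp}\,\varphi(2^{-(kn-1)}\cdot)$ sits at $\tfrac{4}{3}\,2^{kn}$, the lower edge of $\mathrm{supp}\,\varphi(2^{-(kn+1)}\cdot)$ sits at $\tfrac{3}{2}\,2^{kn}$, and the remaining $\varphi$-blocks and $\chi$ are supported even farther away. Hence $\varphi(2^{-kn}\xi)\equiv 1$ on $\mathrm{supp}\,\widehat{g^{k}_{m,n}}$, which immediately gives $\Delta_{kn}g^{k}_{m,n}=g^{k}_{m,n}$ and $\Delta_{j}g^{k}_{m,n}=0$ for every $j\neq kn$. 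The one step that genuinely needs care is the numerical separation above: one must verify that the $\tfrac12$-fattening of the support in the $\xi_1$ direction, together with the $\sqrt{d-1}/2$-fattening in the transverse directions, does not push $|\xi|$ out of the interval $(\tfrac{4}{3}\,2^{kn},\tfrac{3}{2}\,2^{kn})$, which is exactly where the precise constant $\tfrac{17}{12}$ and the hypothesis $k\geq 6$ are used.
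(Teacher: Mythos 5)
Your argument is correct: the Fourier support of $g^k_{m,n}$ indeed lies in $\{\alpha-\tfrac12\le|\xi_1|\le\alpha+\tfrac12,\ |\xi_i|\le\tfrac12\}$ with $\tfrac{17\cdot63}{12\cdot64}2^{kn}\le\alpha\le\tfrac{17\cdot65}{12\cdot64}2^{kn}$, which sits strictly inside the annulus $\tfrac43 2^{kn}<|\xi|<\tfrac32 2^{kn}$ where only the block $\varphi(2^{-kn}\cdot)$ of the partition of unity survives, and this is exactly the support-localization argument of \cite{lyz2} that the paper invokes while omitting the details. So your proof takes essentially the same route as the paper's (deferred) proof, with the numerics for the shift $\pm2^{km}$ and the transverse $\sqrt{d-1}/2$-fattening correctly checked under $k\ge6$.
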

\begin{proof}
The proof is similar to that of in \cite{lyz2}, and here we omit it.
\end{proof}

\begin{lemma}\label{le5}
Define the initial data $u_0(x)$ as
\begin{align*}
u_0(x)=\big(u_0^{1}(x),\cdots,u_0^{d}(x)\big):=\Big(\sum\limits^{\infty}_{n=0}2^{-kn\sigma} f^k_n(x),0,\cdots,0\Big),
\end{align*}ce4
where
$$f^k_n(x):=\phi(x_1)\cos \bi(\frac{17}{12}2^{kn}x_1\bi)\phi(x_2)\cdots \phi(x_d),\quad n\geq 0.$$
Then for any $\sigma\in \big(2+\max\{\frac32,1+\frac d p\},+\infty\big)$ and for some $k$ large enough, we have
\begin{align*}
&|u_{0}|^{2}(x)=\big(u_{0}^{1}(x)\big)^{2},\\
&u_{0}\cd \nabla u_{0}=\Big(\frac 1 2\partial_1 \big((u_{0}^{1})^{2}\big),0,\cdots,0\Big)=\Big(\frac 1 2\partial_1\big(|u_{0}|^{2}\big),0,\cdots,0\Big),\\
&\|u_0\|_{B^{\sigma}_{p,\infty}}\leq C,\\
&\|\De_{kn}\big(|u_0|^2\big)\|_{L^p}\geq c2^{-kn\sigma}.
\end{align*}
\end{lemma}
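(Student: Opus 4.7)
The plan is to verify the four assertions in turn, exploiting the fact that only the first component of $u_0$ is nonzero. The first two identities are immediate: $|u_0|^2=(u_0^1)^2$ holds trivially, and $u_0\cd\n u_0=u_0^1\pa_{x_1}u_0$ has vanishing entries past the first coordinate, with leading entry $u_0^1\pa_{x_1}u_0^1=\tfrac12\pa_{x_1}((u_0^1)^2)$.

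For the Besov bound, I would adapt the Fourier support computation of Lemma \ref{le4} (see also \eqref{fn}) to check that $\De_j f^k_n=f^k_n$ when $j=kn$ and vanishes otherwise, once $k$ is taken large enough. Consequently the dyadic blocks of $u_0$ live on the sparse set $\{kn:n\geq 0\}$ and satisfy $\|\De_{kn}u_0\|_{L^p}=2^{-kn\sigma}\|f^k_n\|_{L^p}\leq 2^{-kn\sigma}\|\phi\|_{L^p}^d$; taking the supremum after multiplying by $2^{kn\sigma}$ yields the claimed uniform bound.

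The lower bound $\|\De_{kn}(|u_0|^2)\|_{L^p}\gtrsim 2^{-kn\sigma}$ is the heart of the lemma. Squaring the series and applying the product-to-sum formula gives
\begin{align*}
f^k_n f^k_m=\tfrac12\phi^2(x_1)\bi[\cos\bi(\tfrac{17}{12}(2^{kn}-2^{km})x_1\bi)+\cos\bi(\tfrac{17}{12}(2^{kn}+2^{km})x_1\bi)\bi]\phi^2(x_2)\cdots\phi^2(x_d).
\end{align*}
Since $\widehat{\phi^2}$ is still supported in $\{|\xi|\leq 1\}$, the same annular analysis as in Lemma \ref{le4} shows that for $k$ large every diagonal contribution $(f^k_n)^2$ sits outside the $\De_{kn}$-annulus (its nontrivial frequency $\tfrac{17}{6}2^{kn}$ exceeds $\tfrac{8}{3}2^{kn}$) while every off-diagonal product with $0\leq m<n$ lies entirely inside it. Isolating the $m=0$ cross term and rewriting it via $\cos A+\cos B=2\cos(\tfrac{A+B}{2})\cos(\tfrac{A-B}{2})$ as $2\cd 2^{-kn\sigma}\phi^2(x_1)\cos(\tfrac{17}{12}x_1)\cos(\tfrac{17}{12}2^{kn}x_1)\phi^2(x_2)\cdots\phi^2(x_d)$, a direct $L^p$ computation (using that the average of $|\cos(\omega\cdot)|^p$ stays bounded below as $\omega\to\infty$, and Fubini in the transverse variables) produces a lower bound $c\,2^{-kn\sigma}$ uniform in $n$. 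The remaining off-diagonal sum is controlled by $\sum_{1\leq m<n}2^{-k(m+n)\sigma}\lesssim 2^{-k\sigma}\cd 2^{-kn\sigma}$, and choosing $k$ large enough that this error sits below $c/2$ finishes the argument. The principal obstacle is precisely this frequency localization step: the difference frequency $\tfrac{17}{12}(2^{kn}-2^{km})$ lies just below $\tfrac{17}{12}2^{kn}$ and could slip out of the $\De_{kn}$-annulus when $k(n-m)$ is small, which is the reason behind the restriction $k\geq 6$ inherited from Lemma \ref{le4}.
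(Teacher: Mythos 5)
Your proposal is correct and follows essentially the same route as the paper: isolate the $m=0$ cross term $2\cdot 2^{-kn\sigma}\phi^2(x_1)\cos\big(\tfrac{17}{12}x_1\big)\cos\big(\tfrac{17}{12}2^{kn}x_1\big)\phi^2(x_2)\cdots\phi^2(x_d)$ via Lemma \ref{le4}, bound the remaining off-diagonal terms by $C2^{-k(n+1)\sigma}$, and take $k$ large to absorb the error. The only cosmetic difference is that you sketch the $L^p$ lower bound for the oscillatory factor directly, whereas the paper cites Lemma 3.2 of \cite{lyz2} for exactly that estimate.
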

\begin{proof}
Thanks to the definition of Besov spaces, the support of $\varphi(2^{-j}\cdot)$ and \eqref{fn}, we see
\begin{align*}
\|u_0\|_{B^{\sigma}_{p,\infty}}&=\sup_{j\geq -1}2^{j\sigma}\|\Delta_{j}u_0\|_{L^p}\\
&=||\phi||^{d-1}_{L^{p}}\cdot\sup_{j\geq 0}\bi\|\phi(x_1)\cos \bi(\frac{17}{12}2^{j}x_1\bi)\bi\|_{L^p}\\
&\leq C.
\end{align*}
Notice that the simple fact
$$\cos(\mathbf{A}+\mathbf{B})+\cos(\mathbf{A}-\mathbf{B})=2\cos \mathbf{A} \cos \mathbf{B}$$
and
$$\sum^{\infty}_{n=0}\sum^{\infty}_{m=0,m\neq n}\mathbf{A}_n\mathbf{A}_m=2\sum^{\infty}_{n=0}\sum^{n-1}_{m=0}\mathbf{A}_n\mathbf{A}_m,$$
then direct computations give
\begin{align*}
&|u_0|^2(x)=\big(u_{0}^{1}(x)\big)^{2}\\
=&\fr12\sum^{\infty}_{n=0}2^{-2kn\sigma}\phi^2(x_1)
\phi^2(x_2)\cdots\phi^2(x_d)
+\fr12\sum^{\infty}_{n=0}2^{-2kn\sigma}\phi^2(x_1)
\cos\bi(\frac{17}{12}2^{kn+1}x_1\bi)\phi^2(x_2)\cdots\phi^2(x_d)\\
+&\sum^{\infty}_{n=1}\sum^{n-1}_{m=0}2^{-k(n+m)\sigma}\phi^2(x_1)\Big[
\cos\bi(\frac{17}{12}(2^{kn}-2^{km})x_1\bi)+
\cos\bi(\frac{17}{12}(2^{kn}+2^{km})x_1\bi)\Big]\phi^2(x_2)\cdots\phi^2(x_d).
\end{align*}
Lemma \ref{le4} yields
\begin{align*}
&\De_{kn}\big(|u_0|^2\big)=\De_{kn}\big((u_{0}^{1})^{2}\big)\\
=&2^{-kn\sigma}\phi^2(x_1)\Big[
\cos\bi(\frac{17}{12}(2^{kn}-1)x_1\bi)+
\cos\bi(\frac{17}{12}(2^{kn}+1)x_1\bi)\Big]\phi^2(x_2)\cdots\phi^2(x_d)
\\ &+\sum^{n-1}_{m=1}2^{-k(n+m)\sigma}\phi^2(x)\Big[
\cos\bi(\frac{17}{12}(2^{kn}-2^{km})x_1\bi)+
\cos\bi(\frac{17}{12}(2^{kn}+2^{km})x_1\bi)\Big]
\phi^2(x_2)\cdots\phi^2(x_d)\\&=:\mathrm{I}_1+\mathrm{I}_2,
\end{align*}
where we denote
\begin{align*}
&\mathrm{I}_1:=2\cdot2^{-kn\sigma}\phi^2(x_1)
\cos\bi(\frac{17}{12}2^{kn}x_1\bi)\cos\bi(\frac{17}{12}x_1\bi)\phi^2(x_2)
\cdots\phi^2(x_d),\\
&\mathrm{I}_2:=2\sum^{n-1}_{m=1}2^{-k(n+m)\sigma}\phi^2(x_1)
\cos\bi(\frac{17}{12}2^{kn}x_1\bi)\cos\bi(\frac{17}{12}2^{km}x_1\bi)\phi^2(x_2)
\cdots\phi^2(x_d).
\end{align*}
For the first term $\mathrm{I}_1$, after a simple calculation, we have
\begin{align}\label{l0}
\|\mathrm{I}_1\|_{L^p(\R^d)}&\geq 2^{-kn\sigma}\bi\|\phi^2(x_1)\cos\bi(\frac{17}{12}x_1\bi) \cos\bi(\frac{17}{12}2^{kn}x_1\bi) \bi\|_{L^p(\R)}||\phi||^{2(d-1)}_{L^{2p}}.
\end{align}
From Lemma 3.2 in \cite{lyz2}, we have for some $\delta>0$
\begin{align}\label{l2}
\bi\|\phi^2(x_{1})\cos\bi(\frac{17}{12}x_{1}\bi)\cos \bi(\fr{17}{12}2^{kn}x_{1}\bi)\bi\|_{L^p(\R)}&\geq c\big(p,\delta,\phi(0)\big).
\end{align}
Then we obtain from \eqref{l0} that
\begin{align}\label{l3}
\|\mathrm{I}_1\|_{L^p}&\geq c2^{-kn\sigma}.
\end{align}
For the second term $\mathrm{I}_2$, from a straightforward calculation, we deduce
\begin{align}\label{l4}
\|\mathrm{I}_2\|_{L^p}\leq C\|\phi\|^{2d}_{L^{2p}}\sum^{n-1}_{m=1}2^{-k(n+m)\sigma}\leq C2^{-k(n+1)\sigma}.
\end{align}
\eqref{l3} and \eqref{l4} together yield that
\begin{align*}
\|\De_{kn}\big(|u_0|^2\big)\|_{L^p}\geq(c-C2^{-k\sigma})2^{-kn\sigma}.
\end{align*}
We choose $k\geq6$ such that $c-C2^{-k\sigma}>0$ and then finish the proof of Proposition \ref{pro3.2}.
\end{proof}

\subsection{Error Estimates}\label{sec3.2}
We first recall the following local-in-time exstence of strong solutions to \eqref{E-P} in \cite{yy}.
\begin{lemma}[See \cite{yy}]\label{hlocal}
Let $d\geq 2,1\leq p,r\leq\infty$ and $s>\max\{1+\frac{d}{p},\frac{3}{2}\}$. Assume that $u_{0}\in B^{s}_{p,r}(\mathbb{R}^{d})$, then there exists a time $T=T(s,p,r,\|u_{0}\|_{B^{s}_{p,r}(\mathbb{R}^{d})})>0$ such that \eqref{E-P} has a unique solution $u\in \mathcal{C}([0,T];B^{s}_{p,r}(\mathbb{R}^{d}))$. Moreover, for all $t\in[0,T]$, there holds
\begin{align*}achm
\|u(t)\|_{B^{s}_{p,r}(\mathbb{R}^{d})}\leq C\|u_{0}\|_{B^{s}_{p,r}(\mathbb{R}^{d})}.
\end{align*}
\end{lemma}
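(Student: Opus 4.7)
The plan is to run a standard Friedrichs-type iteration on the transport reformulation \eqref{E-P1}. I would set $u^{(0)}\equiv 0$ and define $u^{(n+1)}$ as the unique solution of the linear transport equation
\begin{align*}
\partial_{t}u^{(n+1)}+u^{(n)}\cdot\nabla u^{(n+1)}=Q(u^{(n)},u^{(n)})+R(u^{(n)},u^{(n)}),\qquad u^{(n+1)}(0)=u_{0},
\end{align*}
whose solvability at level $B^{s}_{p,r}$ is provided by Lemma \ref{existence} with $\theta=s$. Under the hypothesis $s>\max\{1+\frac{d}{p},\frac{3}{2}\}$ we sit in the regime $\theta>1+\frac{d}{p}$, so the drift control reduces to $\nabla u^{(n)}\in L^{1}_{T}B^{s-1}_{p,r}$, which follows from any uniform $B^{s}_{p,r}$ bound on $u^{(n)}$.

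The uniform a priori bound is the heart of the argument. Because $(I-\Delta)^{-1}\mathrm{div}$ gains one derivative in every nonhomogeneous Besov space, and since $s-1>\frac{d}{p}$ gives the embedding $B^{s-1}_{p,r}\hookrightarrow L^{\infty}$, Lemma \ref{le2} applied to the quadratic factors inside $Q$ and $R$ yields
\begin{align*}
\|Q(u,u)\|_{B^{s}_{p,r}}+\|R(u,u)\|_{B^{s}_{p,r}}\leq C\|u\|_{B^{s}_{p,r}}^{2}.
\end{align*}
Plugging this into Lemma \ref{priori estimate} at level $\theta=s$ and running a Gr\"{o}nwall argument inductively on $n$, I would fix some $T=T(s,p,r,\|u_{0}\|_{B^{s}_{p,r}})>0$ of order $1/(C\|u_{0}\|_{B^{s}_{p,r}})$ so that $\sup_{n}\|u^{(n)}\|_{L^{\infty}_{T}B^{s}_{p,r}}\leq 2C\|u_{0}\|_{B^{s}_{p,r}}$.

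Convergence is then checked in the weaker topology $B^{s-1}_{p,r}$. Setting $\delta u^{(n)}:=u^{(n+1)}-u^{(n)}$, one obtains a transport equation for $\delta u^{(n)}$ with drift $u^{(n)}$ whose source consists of the three terms $-\delta u^{(n-1)}\cdot\nabla u^{(n)}$, $Q(u^{(n)},u^{(n)})-Q(u^{(n-1)},u^{(n-1)})$, and the analogous $R$-difference. The $Q$ and $R$ differences are controlled by Lemma \ref{le1}, and the first term by the product law in $B^{s-1}_{p,r}$; Lemma \ref{priori estimate} at level $\theta=s-1$ then provides a geometric contraction $\|\delta u^{(n)}\|_{L^{\infty}_{T}B^{s-1}_{p,r}}\leq\frac{1}{2}\|\delta u^{(n-1)}\|_{L^{\infty}_{T}B^{s-1}_{p,r}}$ after shrinking $T$ if necessary. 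Hence $u^{(n)}\to u$ in $\mathcal{C}([0,T];B^{s-1}_{p,r})$; weak-$\ast$ compactness in $L^{\infty}_{T}B^{s}_{p,r}$ together with Fatou's lemma place $u$ in $L^{\infty}_{T}B^{s}_{p,r}$ with the announced bound, while top-order time continuity follows from applying Lemma \ref{existence} to $u$ viewed as a solution of the linear transport equation with source $Q(u,u)+R(u,u)\in L^{1}_{T}B^{s}_{p,r}$. Uniqueness is the same Gr\"{o}nwall estimate applied to the difference of two solutions compared in $B^{s-1}_{p,r}$.

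I expect the main obstacle to be the careful bookkeeping of the bilinear estimates for $Q$ and $R$, since the one-derivative smoothing of $(I-\Delta)^{-1}\mathrm{div}$ is sharp and the quadratic terms $\nabla u\,\nabla u^{T}$ and $\nabla u\,(\mathrm{div}\, u)$ inside $Q$ must be estimated in a tame fashion so as not to spill regularity above $B^{s}_{p,r}$. A secondary subtlety is the time-continuity of the limit in the borderline case $r=\infty$, where Lemma \ref{existence} only delivers strong continuity at level $B^{s'}_{p,\infty}$ for each $s'<s$ together with weak-$\ast$ continuity at the sharp level $s$, rather than norm-continuity into $B^{s}_{p,\infty}$ itself.
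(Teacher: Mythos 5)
Your proposal is correct and follows essentially the same route as the cited source: the paper itself does not prove Lemma \ref{hlocal} but quotes it from Yan--Yin \cite{yy}, whose argument is exactly this classical scheme (linear transport iterates solved via Lemma \ref{existence}, uniform $B^{s}_{p,r}$ bounds from the tame product and transport estimates, contraction in $B^{s-1}_{p,r}$, then passage to the limit). Your closing caveat is also the right one: for $r=\infty$ the continuity at the top level is only weak (one has $\mathcal{C}_w([0,T];B^{s}_{p,\infty})$ plus strong continuity in every $B^{s'}_{p,\infty}$, $s'<s$), which is all the present paper actually uses since it only needs the $L^\infty_T B^{s}_{p,\infty}$ bound.
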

\begin{proposition}\label{pro3.1}
Let $s=\sigma-2$ and $u_0\in B^{\sigma}_{p,\infty}$. Assume that $u\in L^\infty_TB^{\sigma}_{p,\infty}$ be the solution to the Cauchy problem \eqref{E-P}, we have
\begin{align*}
&\|u(t)-u_0\|_{B^{s-1}_{p,\infty}}\leq Ct\|u_0\|_{B^{s-1}_{p,\infty}}\|u_0\|_{B^{s}_{p,\infty}},
\\&\|u(t)-u_0\|_{B^{s}_{p,\infty}}\leq Ct\big(\|u_0\|^{2}_{B^{s}_{p,\infty}}+\|u_0\|_{B^{s-1}_{p,\infty}}\|u_0\|_{B^{s+1}_{p,\infty}}\big),
\\&\|u(t)-u_0\|_{B^{s+1}_{p,\infty}}\leq Ct\big(\|u_0\|_{B^{s}_{p,\infty}}\|u_0\|_{B^{s+1}_{p,\infty}}+\|u_0\|_{B^{s-1}_{p,\infty}}\|u_0\|_{B^{s+2}_{p,\infty}}\big).
\end{align*}
\end{proposition}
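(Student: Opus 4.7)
The plan is to integrate the reformulated equation \eqref{E-P1} in time, writing
$$u(t)-u_0=-\int_0^t u\cdot\nabla u\,\dd\tau+\int_0^t\bigl[Q(u,u)+R(u,u)\bigr]\,\dd\tau,$$
and to control the three building blocks $\|u\cdot\nabla u\|_{B^k_{p,\infty}}$, $\|Q(u,u)\|_{B^k_{p,\infty}}$, $\|R(u,u)\|_{B^k_{p,\infty}}$ uniformly on a small time interval for each target index $k\in\{s-1,s,s+1\}$. As a preparatory step I would propagate the Besov norms of $u$ to each such index: since $\sigma>2+\max\{1+\frac{d}{p},\frac{3}{2}\}$ forces $s$, $s+1$, and $s+2=\sigma$ all to exceed $\max\{1+\frac{d}{p},\frac{3}{2}\}$, Lemma \ref{hlocal} gives $\|u(t)\|_{B^k_{p,\infty}}\leq C\|u_0\|_{B^k_{p,\infty}}$ on a short time interval for $k\in\{s,s+1,\sigma\}$; for the remaining index $s-1>0$ the same bound follows from the transport estimate of Lemma \ref{priori estimate}, the drift being controlled in $B^s_{p,\infty}\hookrightarrow W^{1,\infty}$ thanks to $s>1+\frac{d}{p}$.

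For the transport piece $u\cdot\nabla u$ at each admissible $k>0$ I would invoke the Moser-type algebra estimate of Lemma \ref{le2},
$$\|u\cdot\nabla u\|_{B^k_{p,\infty}}\leq C\|u\|_{B^k_{p,\infty}}\|\nabla u\|_{L^\infty}+C\|u\|_{L^\infty}\|\nabla u\|_{B^k_{p,\infty}},$$
and then convert the $L^\infty$ norms via the embeddings $\|u\|_{L^\infty}\lesssim\|u\|_{B^{s-1}_{p,\infty}}$ and $\|\nabla u\|_{L^\infty}\lesssim\|u\|_{B^s_{p,\infty}}$ (both consequences of $s>1+\frac{d}{p}$). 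For $Q(u,u)$, schematically $(I-\Delta)^{-1}\div\bigl(\nabla u\,\nabla u\bigr)$, which gains one derivative, I would apply Lemma \ref{le2} to the quadratic expression at index $k-1$. For $R(u,u)=-(I-\Delta)^{-1}\bigl(u\,\div u+u\cdot(\nabla u)^T\bigr)$, which gains two derivatives, I would apply Lemma \ref{le1} to $u\cdot\nabla u$ at index $k-2$ (and at $k=s-1$ the conclusion is read off directly from the $R$-estimate already contained in Lemma \ref{le1}).

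The main obstacle is obtaining the precise mixed form of the right-hand side at $k=s$ and $k=s+1$, in which one of the product terms has its low factor lying as low as $B^{s-1}_{p,\infty}$. The symmetric product bound of Lemma \ref{le1} alone is too crude for this, and one has to insist on the algebra estimate for the transport and $Q$ pieces so that the $L^\infty$ factor can be paid for by $\|u\|_{B^{s-1}_{p,\infty}}$ via the embedding. Assembling the three building-block bounds in the form $\|u\|_{B^{s-1}_{p,\infty}}\|u\|_{B^s_{p,\infty}}$, $\|u\|_{B^s_{p,\infty}}^2+\|u\|_{B^{s-1}_{p,\infty}}\|u\|_{B^{s+1}_{p,\infty}}$, and $\|u\|_{B^s_{p,\infty}}\|u\|_{B^{s+1}_{p,\infty}}+\|u\|_{B^{s-1}_{p,\infty}}\|u\|_{B^{s+2}_{p,\infty}}$ respectively, integrating in $\tau\in[0,t]$, and substituting the propagation bounds from the preparatory step produces the three claimed inequalities.
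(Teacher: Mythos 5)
Your proposal is correct and follows essentially the same route as the paper's proof: write $u(t)-u_0$ as the time integral of $\partial_\tau u$ via \eqref{E-P1}, estimate $u\cdot\nabla u$, $Q(u,u)$, $R(u,u)$ at the levels $s-1$, $s$, $s+1$ with the product estimates of Lemmas \ref{le1} and \ref{le2} together with the embedding $B^{s-1}_{p,\infty}\hookrightarrow L^\infty$ (valid since $s-1>\max\{\frac dp,\frac12\}$), and close with the propagation bounds $\|u\|_{L^\infty_T B^\gamma_{p,\infty}}\lesssim\|u_0\|_{B^\gamma_{p,\infty}}$, which the paper obtains for all needed $\gamma$ from Lemma \ref{priori estimate} rather than re-invoking Lemma \ref{hlocal} at each index --- an immaterial difference. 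One small repair: at $k=s-1$ you propose Lemma \ref{le2} for the quadratic in $Q$ at index $s-2$, but $s-2$ may be negative (e.g.\ $p>2d$ with $\frac32<s<2$), where Lemma \ref{le2} is unavailable; use instead Lemma \ref{le1}, whose stated $Q$-estimate at level $B^{s-1}_{p,\infty}$ (with $v=0$) gives exactly $\|Q(u,u)\|_{B^{s-1}_{p,\infty}}\lesssim\|u\|_{B^{s-1}_{p,\infty}}\|u\|_{B^{s}_{p,\infty}}$, just as you already do for $R$.
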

\begin{proof}
Due to Lemma \ref{hlocal}, we know that there exists a positive time $T=T(s,p,r,\|u_0\|_{B^s_{p,\infty}}(\mathbb{R}^{d}))$ such that
\begin{align}\label{s}
\|u(t)\|_{L^\infty_TB^s_{p,\infty}}\leq C\|u_0\|_{B^s_{p,\infty}}\leq C.
\end{align}
Moreover, for $\gamma>\fr12$, taking advantage of Lemma \ref{priori estimate} and \eqref{s}, we have
\begin{align}\label{s1}
\|u(t)\|_{L^\infty_TB^\gamma_{p,\infty}}\leq C\|u_0\|_{B^\gamma_{p,\infty}}.
\end{align}
By the Mean Value Theorem, we obtain from \eqref{E-P1} and \eqref{s1} that
\begin{align*}
\|u(t)-u_0\|_{B^s_{p,\infty}}
&\leq \int^t_0\|\pa_\tau u\|_{B^s_{p,\infty}} \dd\tau
\\&\leq \int^t_0\|Q(u,u)\|_{B^s_{p,\infty}} \dd\tau+\int^t_0\|R(u,u)\|_{B^s_{p,\infty}} \dd\tau+ \int^t_0\|u\cd\na u\|_{B^s_{p,\infty}} \dd\tau
\\&\leq Ct\big(\|u\|^2_{L_t^\infty B^{s}_{p,\infty}}+\|u\|_{L_t^\infty L^\infty}\|\na u\|_{L_t^\infty B^{s}_{p,\infty}}\big)
\\&\leq Ct\big(\|u\|^2_{L_t^\infty B^{s}_{p,\infty}}+\|u\|_{L_t^\infty B^{s-1}_{p,\infty}}\|u\|_{L_t^\infty B^{s+1}_{p,\infty}}\big)
\\&\leq Ct\big(\|u_0\|^2_{B^{s}_{p,\infty}}+\|u_0\|_{B^{s-1}_{p,\infty}}\|u_0\|_{B^{s+1}_{p,\infty}}\big),
\end{align*}
where we have used that $B_{p, \infty}^{s-1}\hookrightarrow L^\infty$ with $s-1>\max\{\frac{d}{p}, \frac{1}{2}\}$.

Following the same procedure as above, according to Lemmas \ref{le1} and \ref{le2}, we see
\begin{align*}
\|u(t)-u_0\|_{B^{s-1}_{p,\infty}}
&\leq \int^t_0\|\pa_\tau u\|_{B^{s-1}_{p,\infty}} \dd\tau
\\&\leq \int^t_0\|Q(u,u)\|_{B^{s-1}_{p,\infty}} \dd\tau+\int^t_0\|R(u,u)\|_{B^{s-1}_{p,\infty}} \dd\tau+ \int^t_0\|u\cd\na u\|_{B^{s-1}_{p,\infty}} \dd\tau
\\&\leq Ct\|u\|_{L_t^\infty B^{s-1}_{p,\infty}}\|u\|_{L_t^\infty B^{s}_{p,\infty}}
\\&\leq Ct\|u_0\|_{B^{s-1}_{p,\infty}}\|u_0\|_{B^{s}_{p,\infty}}
\end{align*}
and
\begin{align*}
\|u(t)-u_0\|_{B^{s+1}_{p,\infty}}
&\leq \int^t_0\|\pa_\tau u\|_{B^{s+1}_{p,\infty}} \dd\tau
\\&\leq \int^t_0\|Q(u,u)\|_{B^{s+1}_{p,\infty}} \dd\tau+\int^t_0\|R(u,u)\|_{B^{s+1}_{p,\infty}} \dd\tau+ \int^t_0\|u\cd\na u\|_{B^{s+1}_{p,\infty}} \dd\tau
\\&\leq Ct\big(\|u\|_{L_t^\infty B^{s}_{p,\infty}}\|u\|_{L_t^\infty B^{s+1}_{p,\infty}}
+\|u\|_{L_t^\infty B^{s-1}_{p,\infty}}\|u\|_{L_t^\infty B^{s+2}_{p,\infty}}\big)
\\&\leq Ct\big(\|u_0\|_{B^{s}_{p,\infty}}\|u_0\|_{B^{s+1}_{p,\infty}}+\|u_0\|_{B^{s-1}_{p,\infty}}\|u_0\|_{B^{s+2}_{p,\infty}}\big).
\end{align*}
Thus, we finish the proof of Proposition \ref{pro3.1}.
\end{proof}

\begin{proposition}\label{pro3.2}
Let $s=\sigma-2$ and $u_0\in B^{\sigma}_{p,\infty}$. Assume that $u\in L^\infty_TB^{\sigma}_{p,\infty}$ be the solution of the Cauchy problem \eqref{E-P}, we have
\begin{align*}
\|\mathbf{w}(t,u_0)\|_{B^{s}_{p,\infty}}\leq Ct^2\big(\|u_0\|^3_{B^s_{p,\infty}}+\|u_0\|_{B^{s-1}_{p,\infty}}\|u_0\|_{B^{s}_{p,\infty}}\|u_0\|_{B^{s+1}_{p,\infty}}
+\|u_0\|_{B^{s-1}_{p,\infty}}^{2}\|u_0\|_{B^{s+2}_{p,\infty}}\big),
\end{align*}
here and in what follows we denote
\begin{align*}
&\mathbf{w}(t,u_0):=u(t)-u_0-t\mathbf{v}_0,\\
&\mathbf{v}_0:=-u_0\cd\na u_0+{Q}(u_0,u_{0})+{R}(u_0,u_{0}).
\end{align*}
In particular, we obtain
\begin{align*}
\|\mathbf{w}(t,u_0)\|_{B^{\sigma-2}_{p,\infty}}\leq C\big(\|u_0\|_{B^{\sigma}_{p,\infty}}\big)t^{2}.
\end{align*}
\end{proposition}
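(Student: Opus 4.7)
The plan is to regard $\mathbf{v}_0$ as $\partial_t u\big|_{t=0}$, so that $\mathbf{w}(t,u_0)$ is the second-order Taylor remainder of the solution at the origin. Using \eqref{E-P1} and $u(0)=u_0$ one writes
\begin{align*}
\mathbf{w}(t,u_0)=\int_0^t\!\bigl[-\bigl(u\cd\na u-u_0\cd\na u_0\bigr)+\bigl(Q(u,u)-Q(u_0,u_0)\bigr)+\bigl(R(u,u)-R(u_0,u_0)\bigr)\bigr]\,\dd\tau,
\end{align*}
so the task reduces to bounding each of the three differences in the integrand in $B^{s}_{p,\infty}$ by an $O(\tau)$ quantity; a single time integration then supplies the desired $t^{2}$ factor.

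For each difference I would use the standard bilinear factorization, e.g.\
\begin{align*}
u\cd\na u-u_0\cd\na u_0=(u-u_0)\cd\na u+u_0\cd\na(u-u_0),
\end{align*}
and analogous decompositions inside $Q$ and $R$, where the smoothing operators $(I-\De)^{-1}\div$ and $(I-\De)^{-1}$ recover one and two derivatives, respectively. For the transport and $Q$ summands I would apply the mixed product inequality of Lemma \ref{le2} at levels $B^{s}_{p,\infty}$ and $B^{s-1}_{p,\infty}$, using the embedding $B^{s-1}_{p,\infty}\hookrightarrow L^\infty$ (which holds since $s-1>d/p$, given $\sigma-2>1+d/p$) to place one factor in $L^\infty$; for the $R$ summand I would rather call on Lemma \ref{le1} at level $B^{s-2}_{p,\infty}$. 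Each product is then controlled by a sum of terms of the templates
$\|u-u_0\|_{B^{s}_{p,\infty}}\|u_0\|_{B^{s}_{p,\infty}}$, $\|u-u_0\|_{B^{s-1}_{p,\infty}}\|u_0\|_{B^{s+1}_{p,\infty}}$, and $\|u-u_0\|_{B^{s+1}_{p,\infty}}\|u_0\|_{B^{s-1}_{p,\infty}}$, after replacing the $u(\tau)$-norms by $u_0$-norms via Lemma \ref{hlocal} together with the high-regularity propagation of Lemma \ref{priori estimate} for $\ga\in\{s-1,s,s+1,s+2\}$.

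Substituting the three estimates of Proposition \ref{pro3.1} into these templates reproduces the three terms on the right-hand side of the claim: the cubic term $\|u_0\|_{B^{s}_{p,\infty}}^3$ arises from pairing $\|u-u_0\|_{B^{s}_{p,\infty}}$ with $\|u_0\|_{B^{s}_{p,\infty}}$; the mixed term $\|u_0\|_{B^{s-1}_{p,\infty}}\|u_0\|_{B^{s}_{p,\infty}}\|u_0\|_{B^{s+1}_{p,\infty}}$ appears in essentially every factorization; and the top-regularity term $\|u_0\|_{B^{s-1}_{p,\infty}}^{2}\|u_0\|_{B^{s+2}_{p,\infty}}$ comes only from the piece $u_0\cd\na(u-u_0)$ of the transport difference, after pairing $\|u_0\|_{L^\infty}\les\|u_0\|_{B^{s-1}_{p,\infty}}$ with $\|u-u_0\|_{B^{s+1}_{p,\infty}}$ and invoking the third bound of Proposition \ref{pro3.1}. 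Integration over $\tau\in[0,t]$ then delivers the $t^{2}$ estimate, and the \emph{in particular} statement follows at once because $s+2=\sigma$. The only delicate step I anticipate is the bookkeeping: one must insist on the mixed $L^\infty/B^{s}_{p,\infty}$ form of Lemma \ref{le2} rather than the symmetric algebra bound $\|fg\|_{B^{s}_{p,\infty}}\les\|f\|_{B^{s}_{p,\infty}}\|g\|_{B^{s}_{p,\infty}}$, which would otherwise manufacture spurious terms such as $\|u_0\|_{B^{s}_{p,\infty}}^{2}\|u_0\|_{B^{s+1}_{p,\infty}}$ that cannot be absorbed into the three listed terms on the right-hand side.
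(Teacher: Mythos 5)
Your proposal follows essentially the same route as the paper: write $\mathbf{w}(t,u_0)=\int_0^t(\partial_\tau u-\mathbf{v}_0)\,\dd\tau$, split the integrand into the differences $u\cd\na u-u_0\cd\na u_0$, $Q(u,u)-Q(u_0,u_0)$, $R(u,u)-R(u_0,u_0)$, estimate each in $B^{s}_{p,\infty}$ via the product estimates of Lemmas \ref{le1}--\ref{le2} against exactly the three templates $\|u-u_0\|_{B^{s}_{p,\infty}}\|u_0\|_{B^{s}_{p,\infty}}$, $\|u-u_0\|_{B^{s-1}_{p,\infty}}\|u\|_{B^{s+1}_{p,\infty}}$, $\|u-u_0\|_{B^{s+1}_{p,\infty}}\|u_0\|_{B^{s-1}_{p,\infty}}$, and then insert Proposition \ref{pro3.1} and integrate to gain the factor $t^{2}$. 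The argument and the bookkeeping of terms are correct and match the paper's proof, including the concluding observation that $s+2=\sigma$ gives the ``in particular'' bound.
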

\begin{proof}
Taking advantage of the Mean Value Theorem and \eqref{E-P1},  and then using Lemma \ref{le1} and Lemma \ref{le2}, we find
\begin{align*}
\|\mathbf{w}(t,u_0)\|_{B^s_{p,\infty}}
\leq& \int^t_0\|\pa_\tau u-\mathbf{v}_0\|_{B^s_{p,\infty}} \dd\tau
\\
\leq&\int^t_0\|{Q}(u,u)-{Q}(u_0,u_{0})\|_{B^s_{p,\infty}}+\|{R}(u,u)-{R}(u_0,u_{0})\|_{B^s_{p,\infty}} \dd\tau\\
&+\int^t_0\|u\cd\na u-u_0\cd\na u_0\|_{B^s_{p,\infty}} \dd\tau
\\
\leq&C\int^t_0\|u(\tau)-u_0\|_{B^s_{p,\infty}} \|u_0\|_{B^s_{p,\infty}}\dd\tau+C\int^t_0\|u(\tau)-u_0\|_{B^{s-1}_{p,\infty}} \|u(\tau)\|_{B^{s+1}_{p,\infty}} \dd\tau
\\
&+C\int^t_0\|u(\tau)-u_0\|_{B^{s+1}_{p,\infty}}\|u_0\|_{B^{s-1}_{p,\infty}}\dd \tau
\\
\leq&Ct^2\big(\|u_0\|^3_{B^s_{p,\infty}}+\|u_0\|_{B^{s-1}_{p,\infty}}\|u_0\|_{B^{s}_{p,\infty}}\|u_0\|_{B^{s+1}_{p,\infty}}+\|u_0\|_{B^{s-1}_{p,\infty}}^{2}\|u_0\|_{B^{s+2}_{p,\infty}}\big),
\end{align*}
where we have used Proposition \ref{pro3.1} in the last step.

Thus, we complete the proof of Proposition \ref{pro3.2}.

Now we present the proof of Theorem \ref{th1}.
\end{proof}
\begin{proof}[\bf Proof of Theorem \ref{th1}:]
Using Proposition \ref{bern}, Proposition \ref{pro3.2} and Lemma \ref{le5}, we get
\begin{align*}
\|u(t)-u_0\|_{B^\sigma_{p,\infty}}
\geq&2^{{kn\sigma}}\big\|\De_{kn}\big(u(t)-u_0\big)\big\|_{L^p}=2^{{kn\sigma}}\big\|\De_{kn}\big(t\vv_0+\mathbf{w}(t,u_0)\big)\big\|_{L^p}\\
\geq& t2^{{kn\sigma}}\|\De_{kn}\big(\mathbf{v}_0\big)\|_{L^p}-2^{{2kn}}2^{{kn(\sigma-2)}}\big\|\De_{kn}\big(\mathbf{w}(t,u_0)\big)\big\|_{L^p}\\
\geq& t2^{{kn}(\sigma+1)}\|\De_{kn}\big(|u_0|^2\big)\|_{L^p}-
Ct(\|u_0\|_{B^{\sigma-1}_{p,\infty}}^{2}+\|\na u_0\|_{B^{\sigma-1}_{p,\infty}}^{2})\\
&-C2^{2{kn}}\|\mathbf{w}(t,u_0)\|_{B^{\sigma-2}_{p,\infty}}
\\
\geq& t2^{{kn}(\sigma+1)}\|\De_{kn}\big(|u_0|^2\big)\|_{L^p}-
Ct-C2^{2{kn}}t^2\\
\geq& ct2^{{kn}}-Ct-C2^{2{kn}}t^2.
\end{align*}
Then, for $k\geq 6$, taking $n>N$ large  enough such that $c2^{{kn}}\geq 2C$, we deduce that
\begin{align*}
\|u(t)-u_0\|_{B^\sigma_{p,\infty}}\geq ct2^{{kn}}-C2^{2{kn}}t^2.
\end{align*}
Thus, choosing $t2^{kn}\approx\ep$ with small $\ep$, we finally conclude that
\begin{align*}
\|u(t)-u_0\|_{B^\sigma_{p,\infty}}\geq c\ep-C\ep^2\geq c_1\ep.
\end{align*}
This proves Theorem \ref{th1}.
\end{proof}

\vspace*{1em}
\noindent\textbf{Acknowledgements.}
Y. Guo was supported by the Guangdong Basic and Applied Basic Research Foundation (No. 2020A1515111092) and Research Fund of Guangdong-Hong Kong-Macao Joint Laboratory for Intelligent Micro-Nano Optoelectronic Technology (No. 2020B1212030010).

\vspace*{1em}
\noindent\textbf{Conflict of interest}
The authors declare that they have no conflict of interest.




\end{document}